\documentclass[a4paper,12pt]{article}
\usepackage[top=2.5cm,bottom=2.5cm,left=2.5cm,right=2.5cm]{geometry}
\usepackage{cite, amsmath, amssymb}
\usepackage{amssymb}
\usepackage{graphicx}
\newenvironment{proof}{{\noindent\textbf{\text \it {Proof.}}}}{\hfill $\blacksquare$\par}
\usepackage{latexsym}
\usepackage{graphicx,booktabs,multirow}
\usepackage{tikz}
\usetikzlibrary{decorations.pathreplacing}
\usetikzlibrary{intersections}
\usepackage{enumerate}
\usepackage{graphicx,booktabs,multirow}
\usepackage{appendix}
\newtheorem{theorem}{Theorem}[section]
\newtheorem{definition}[theorem]{Definition}
\newtheorem{lemma}[theorem]{Lemma}

\newtheorem{corollary}[theorem]{\rm\bfseries Corollary}

%\pagenumbering{gobble}
\usepackage[section]{placeins}
\def\NAT@def@citea{\def\@citea{\NAT@separator}}% Suppress spaces between citations using natbib.sty
\allowdisplaybreaks

\begin{document}
\vspace*{10mm}

\noindent
{\Large \bf The existence of a $\{P_{2},C_{3},P_{5},\mathcal{T}(3)\}$-factor based on the size or the $A_{\alpha}$-spectral radius of graphs}

\vspace*{7mm}

\noindent
{\large \bf Xianglong Zhang, Lihua You$^*$}
\noindent

\vspace{7mm}

\noindent
School of Mathematical Sciences, South China Normal University,  Guangzhou, 510631, P. R. China,
e-mail: {\tt 2023021945@m.scnu.edu.cn}, {\tt ylhua@scnu.edu.cn}.\\[2mm]
$^*$ Corresponding author
\noindent

%\footnotesize $^1${\it School of Mathematical Sciences, South China Normal University, Guangzhou, 510631, P. R. China}\\
%\footnotesize $^2${\it Department of Mathematics Teaching, Guangzhou Civil Aviation College, Guangzhou, 510403, P. R. China}\\
%\noindent
% $^2${\it Department of Mathematics Teaching, Guangzhou Civil Aviation College, Guangzhou, 510403, P. R. China\/} \\
\vspace{7mm}

\noindent
{\bf Abstract} \
\noindent
Let $G$ be a connected graph of order $n$. A $\{P_{2},C_{3},P_{5},\mathcal{T}(3)\}$-factor of $G$ is a spanning subgraph of $G$ such that each component is isomorphic to a member in $\{P_{2},C_{3},P_{5},\mathcal{T}(3)\}$, where $\mathcal{T}(3)$ is a $\{1,2,3\}$-tree. The $A_{\alpha}$-spectral radius of $G$ is denoted by $\rho_{\alpha}(G)$. In this paper, we obtain a lower bound on the size or the $A_{\alpha}$-spectral radius for $\alpha\in[0,1)$ of $G$ to guarantee that $G$ has a $\{P_{2},C_{3},P_{5},\mathcal{T}(3)\}$-factor, and construct an extremal graph to show that the bound on $A_{\alpha}$-spectral radius is optimal.
 \\[2mm]
%\vspace{5mm}

\noindent
{\bf Keywords:} Size; $\{P_{2},C_{3},P_{5},\mathcal{T}(3)\}$-factor; $A_{\alpha}$-spectral radius

\noindent
{\bf MSC:} 05C50;05C35

\baselineskip=0.30in

\section{Introduction}

 \hspace{1.5em}Let $G$ be an undirected simple and connected graph with vertex set $V(G)$ and edge set $E(G)$. The \textit{order} of $G$ is the number of its vertices, and the \textit{size} is the number of its edges. 

 Let $G$ be a graph of order $n$ with $V(G)=\{v_{1},v_{2},...,v_{n}\}$. The \textit{adjacency matrix} of $G$ is defined as $A(G)=(a_{ij})$, where $a_{ij}=1$ if $v_{i}v_{j}\in E(G)$, and $a_{ij}=0$ otherwise. The degree diagonal matrix is the diagonal matrix of vertex degrees of $G$, denoted by $D(G)$. The  $signless$ $Laplacian$ $matrix$ $Q(G)$ of $G$ is defined by $Q(G)=D(G)+A(G)$. The largest eigenvalue of $Q(G)$ is denoted by $q(G)$. For any $\alpha \in[0,1)$, Nikiforov\cite{Ba} introduced the $A_{\alpha}$-matrix of $G$ as $A_{\alpha}(G)=\alpha D(G)+(1-\alpha) A(G)$. It is easy to see that  $A_{\alpha}(G)=A(G)$ if $\alpha=0$, and $A_{\alpha}(G)=\frac{1}{2}Q(G)$ if $\alpha=\frac{1}{2}$. The eigenvalues of $A_{\alpha}(G)$ are called the $A_{\alpha}$-eigenvalues of $G$, and the largest of them, denoted by $\rho_{\alpha}(G)$, is called the $A_{\alpha}$-spectral radius of $G$. More interesting spectral properties of $A_{\alpha}(G)$ can be found in \cite{Ba,Bl,Bm}. 

 For a given subset $S\subseteq V(G)$, the subgraph of $G$ induced by $S$ is denoted by $G[S]$, and the subgraph obtained from $G$ by deleting $S$ together with those edges incident to $S$ is denoted by $G-S$. Let $G$ and $H$ be two disjoint graphs. The union $G\cup H$ is the graph with vertex set $V(G)\cup V(H)$ and edge set $E(G)\cup E(H)$. The join $G\vee H$ is derived from $G\cup H$ by joining every vertex of $G$ with every vertex of $H$ by an edge.  
 
 A subgraph of a graph $G$ is spanning if the subgraph covers all vertices of $G$. Let $\mathcal{H}$ be a set of connected graphs. An $\mathcal{H}$-factor of a graph $G$ is a spanning subgraph of $G$, in which each component is isomorphic to an element of $\mathcal{H}$. An $\mathcal{H}$-factor is also referred as a component factor.
 
 For a tree $T$, every vertex of degree 1 is a leaf of $T$. We denote the set of leaves in $T$ by $Leaf(T)$. An edge of $T$ incident with a leaf is called a pendant edge. A $\{1,3\}$-tree is a tree with every vertex having degree 1 or 3. Let $R$ be a $\{1,3\}$-tree, $\mathbb{T}(3)$ be the set of trees $T_{R}$ that can be obtained from $R$ as follows (see \cite{Bkk}): $T_{R}$ is obtained from $R$ by inserting a new vertex of degree 2 into every edge of $R$, and by adding a new pendant edge to every leaf of $R$. Then the tree $T_{R}$ is a $\{1,2,3\}$-tree having $|E(R)|+|Leaf(R)|$ vertices of degree 2 and has the same number of leaves as $R$. The collection of such $\{1,2,3\}$-trees $T_{R}$ generated from all $\{1,3\}$-trees $R$ is denoted by $\mathbb{T}(3)$, and any graph in $\mathbb{T}(3)$ is denoted by $\mathcal{T}(3)$.

  More and more researchers have been studied the existence of different factors in graphs since 2000. Las Vergnas \cite{Be} presented a sufficient and necessary condition for a graph having $\{K_{1,j}:1\leq j\leq k\}$-factor with $k\geq2$. Kano, Lu and Yu \cite{Bg} showed that a graph has a $\{K_{1,2},K_{1,3},K_{5}\}$-factor if it satisfies $i(G-S)\leq\frac{|S|}{2}$ for every $S\subset V(G)$. Kano, Lu and Yu \cite{Bkk} proved that a graph has a $\{P_{2},C_{3},P_{5},\mathcal{T}(3)\}$-factor if and only if it satisfies $i(G-S)\leq\frac{3}{2}|S|$ for all $S\subset V(G)$. Kano and Saito \cite{Bf} proved that a graph $G$ has a $\{K_{1,l}:m\leq l\leq 2m\}$-factor if it satisfies $i(G-S)\leq\frac{1}{m}|S|$ for every $S\subset V(G)$. Zhang, Yan and Kano \cite{Bh} gave a sufficient condition for a graph $G$ containing a $\{K_{1,t}:m\leq t\leq 2m-1\}\cup\{K_{2m+1}\}$-factor. Chen, Lv and Li \cite{Bb} provided a lower  bound on the size (resp. the spectral radius) of $G$ to guarantee that the graph has a $\{P_{2},C_{n}:n\geq3\}$-factor. Lv, Li and Xu \cite{Bbb} derived a tight $A_{\alpha}$-spectral radius and distance signless Laplacian spectral radius for the existence of a $\{K_{2},C_{2i+1}:i\geq1\}$-factor in a graph. Li and Miao \cite{Bo} determined a sufficient condition about the size or the spectral radius of $G$ to contain $\mathcal{P}_{\geq2}$-factor and be $\mathcal{P}_{\geq2}$-factor covered graphs. Miao and Li \cite{Bc} showed a lower bound on the size or the spectral radius, and an upper bound on the distance spectral radius of $G$ to ensure that $G$ has a $\{K_{1,j}:1\leq j\leq k\}$-factor. Zhou, Zhang and Sun \cite{Bd} established a relationship between $P_{\geq2}$-factor and $A_{\alpha}$-spectral radius of a graph.  Zhang and You \cite{Bp} showed sufficient conditions via the size or the $A_{\alpha}$-spectral radius of graphs to ensure that a graph contains a $\{K_{1,2},K_{1,3},K_{5}\}$-factor. Zhou \cite{Bn} got a spectral radius condition on the existence of $\{P_{2},C_{3},P_{5},\mathcal{T}(3)\}$-factor in graphs.
 
  Motived by \cite{Bkk,Bn,Bd} directly, it is natural and interesting to study some sufficient and necessary conditions to ensure that a graph contains a $\{P_{2},C_{3},P_{5},\mathcal{T}(3)\}$-factor. In this paper, we focus on the sufficient conditions via the size or the $A_{\alpha}$-spectral radius of graphs and obtain the following two results.

\begin{theorem}\label{c1}
	Let $G$ be a connected graph of order $n\geq 5$, and
	\begin{equation*}
		F(n)=\begin{cases}
			\binom{n-2}{2}+2,&\text{if $n\geq5$ and $n\notin\{6,8\}$};\\
			9,&\text{if $n=6$};\\
			18,&\text{if $n=8$}.\\
		\end{cases}
	\end{equation*} 
  If $|E(G)|>F(n)$, then $G$ has a $\{P_{2},C_{3},P_{5},\mathcal{T}(3)\}$-factor.\\

\end{theorem}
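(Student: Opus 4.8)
The plan is to argue by contradiction through the structural characterization of Kano, Lu and Yu \cite{Bkk}: a graph $G$ admits a $\{P_{2},C_{3},P_{5},\mathcal{T}(3)\}$-factor if and only if $i(G-S)\leq\frac{3}{2}|S|$ for every $S\subset V(G)$, where $i(G-S)$ is the number of isolated vertices of $G-S$. Accordingly, I would assume that $G$ has no such factor and fix a set $S$ that witnesses the failure, so that $i(G-S)>\frac{3}{2}|S|$. Writing $s=|S|$ and $i=i(G-S)$, integrality of $i$ forces $i\geq\lfloor\frac{3s}{2}\rfloor+1$; moreover, since $G$ is connected with $n\geq5$, a connected graph on at least two vertices has no isolated vertex, so $S=\emptyset$ cannot be a witness and we may assume $s\geq1$.

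The next step is to turn this witnessing set into an upper bound on $|E(G)|$. Partition $V(G)$ into $S$, the $i$ isolated vertices of $G-S$, and the remaining $n-s-i$ vertices. Every edge lies inside $S$, joins $S$ to the rest, or lies inside the non-isolated part, while the isolated vertices are joined neither to one another nor to the non-isolated part. Counting all such potential edges yields
\[
|E(G)|\leq\binom{s}{2}+s(n-s)+\binom{n-s-i}{2}=:g(s,i),
\]
with equality precisely for $G=K_{s}\vee(\overline{K_{i}}\cup K_{n-s-i})$. As $g(s,i)$ is non-increasing in $i$, I would substitute the least admissible value $i=\lfloor\frac{3s}{2}\rfloor+1$; the borderline case $n-s-i=1$ changes nothing since $\binom{1}{2}=\binom{0}{2}=0$. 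This gives $|E(G)|\leq\phi(s):=g\!\left(s,\lfloor\frac{3s}{2}\rfloor+1\right)$.

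It then remains to prove $\max_{s}\phi(s)=F(n)$, where $s$ ranges over integers with $s\geq1$ and $s+\lfloor\frac{3s}{2}\rfloor+1\leq n$. Here I would split according to the parity of $s$, so that the non-isolated part has size $n-5k-1$ when $s=2k$ and size $n-5k-3$ when $s=2k+1$, and compare $\phi$ at feasible values of $s$. For $s=1$ a direct computation gives $\phi(1)=\binom{n-2}{2}+2$, realized by $K_{1}\vee(2K_{1}\cup K_{n-3})$; the crux is to show that no larger $s$ exceeds this when $n\notin\{6,8\}$, whereas for $n=6$ the maximum moves to $s=2$ (the graph $K_{2}\vee 4K_{1}$, value $9$) and for $n=8$ to $s=3$ (the graph $K_{3}\vee 5K_{1}$, value $18$). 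Both exceptions arise exactly because the clique $K_{n-s-i}$ degenerates to at most one vertex at those small thresholds, so there the extra $S$-to-rest and within-$S$ edges outweigh the vanished quadratic term $\binom{n-s-i}{2}$; at the analogous larger thresholds ($n=11,16,\dots$) the quadratic term $\binom{n-3}{2}$ in $\phi(1)$ already dominates, so $s=1$ stays optimal and no further exceptions occur. Establishing $\phi(s)\leq F(n)$ then forces $|E(G)|\leq F(n)$, contradicting $|E(G)|>F(n)$.

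The step I expect to be the main obstacle is this final discrete optimization. Because of the floor function, $\phi$ obeys different formulas on even and odd $s$, and the sporadic exceptions $n=6,8$ prevent one from simply claiming that $\phi$ decreases monotonically in $s$. The delicate part is bounding the differences $\phi(s+1)-\phi(s)$ (or $\phi(s+2)-\phi(s)$ within a fixed parity) and isolating the narrow small-$n$ window in which the clique term disappears; once that bookkeeping is done, the remainder of the argument is routine.
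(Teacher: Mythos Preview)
Your approach is correct and essentially the same as the paper's: both proofs invoke the Kano--Lu--Yu criterion to obtain a witnessing set $S$, then bound $|E(G)|$ by the edge count of $K_{s}\vee(\overline{K_{i}}\cup K_{n-s-i})$ with $i=\lfloor\tfrac{3s}{2}\rfloor+1$ and reduce to the same discrete maximization over $s$, with the exceptional values $n\in\{6,8\}$ arising from $s=2$ and $s=3$ exactly as you describe. Your direct edge-count bound plus monotonicity in $i$ is a mild streamlining of the paper's extremal-graph argument and its case split on whether $G-S$ has a non-trivial component, but the resulting inequalities $\phi(s)\le F(n)$ and their parity-by-parity verification are identical.
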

\vspace{-30pt}

\begin{theorem}\label{c2}
	Let $\alpha \in [0,1)$, $\varphi(x)=x^3-((\alpha+1)n+\alpha-4)x^2+(\alpha n^2+(\alpha^2-2\alpha-1)n-2\alpha+1)x-\alpha^2n^2+(5\alpha^2-3\alpha+2)n-10\alpha^2+15\alpha-8$, $G$ be a connected graph of order $n$ with $n\geq f(\alpha)$, where
	\begin{equation*}
		f(\alpha)=\begin{cases}
			20,&\text{if $\alpha\in [0,\frac{1}{2}]$};\\
			25,&\text{if $\alpha\in (\frac{1}{2},\frac{5}{7}]$};\\
			\frac{7}{1-\alpha}+3,&\text{if $\alpha\in (\frac{5}{7},1)$}.\\
		\end{cases}
	\end{equation*}
	 If $\rho_{\alpha}(G)>\tau(n)$, then $G$ has a
	$\{P_{2},C_{3},P_{5},\mathcal{T}(3)\}$-factor, where $\tau(n)$ is the largest root of $\varphi(x)=0$.
\end{theorem}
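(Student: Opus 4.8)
The plan is to argue by contradiction through the structural characterization of Kano, Lu and Yu \cite{Bkk}: $G$ admits a $\{P_{2},C_{3},P_{5},\mathcal{T}(3)\}$-factor if and only if $i(G-S)\le\frac{3}{2}|S|$ for every $S\subset V(G)$. So I would assume $G$ has no such factor and fix a witnessing set $S$ with $i(G-S)>\frac{3}{2}|S|$. Writing $s=|S|$ and $i=i(G-S)$, connectivity of $G$ (together with $n\ge f(\alpha)\ge 20$) rules out $s=0$, so $s\ge1$, and integrality of $i$ forces $i\ge\lfloor\frac{3}{2}s\rfloor+1\ge2$.

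Next I would reduce to an explicit extremal family. Since the $i$ isolated vertices of $G-S$ send all their edges into $S$, while the remaining part of $G$ can be no denser than a clique, $G$ is a spanning subgraph of $G_{s,i}:=K_{s}\vee(K_{n-s-i}\cup iK_{1})$. As $G$ is connected, its $A_{\alpha}$-Perron vector is positive, and since for $x>0$ the form $\alpha x_{u}^{2}+\alpha x_{v}^{2}+2(1-\alpha)x_{u}x_{v}$ attached to a new edge $uv$ is strictly positive, adding edges strictly increases $\rho_{\alpha}$; hence $\rho_{\alpha}(G)\le\rho_{\alpha}(G_{s,i})$. Moreover $G_{s,i}\subseteq H_{s}:=K_{s}\vee(K_{n-s-i_{s}}\cup i_{s}K_{1})$ with $i_{s}=\lfloor\frac{3}{2}s\rfloor+1$, because enlarging the clique at the expense of an isolated vertex only adds edges, so $\rho_{\alpha}(G_{s,i})\le\rho_{\alpha}(H_{s})$. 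It therefore suffices to prove $\rho_{\alpha}(H_{s})\le\tau(n)$ for every feasible $s\ge1$, with equality exactly at $s=1$; this contradicts $\rho_{\alpha}(G)>\tau(n)$.

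For $s=1$ the graph $H_{1}=K_{1}\vee(K_{n-3}\cup2K_{1})$ is precisely the extremal graph. Its partition of $V(H_{1})$ into the apex, the clique and the two pendant vertices is equitable, so $\rho_{\alpha}(H_{1})$ equals the Perron root of the associated $3\times3$ quotient matrix $B_{1}$, whose diagonal is $(\alpha(n-1),\,n+\alpha-4,\,\alpha)$; a direct computation shows $\det(xI-B_{1})=\varphi(x)$, so that $\rho_{\alpha}(H_{1})=\tau(n)$. Along the way I would record the lower bound $\tau(n)>n-3$, which follows because $H_{1}$ contains $K_{n-2}=K_{1}\vee K_{n-3}$ and $\rho_{\alpha}(K_{n-2})=n-3$, together with sharper $\alpha$-dependent two-sided estimates for $\tau(n)$ (for small $\alpha$ one also gets $\tau(n)<n-2$ from $\varphi(n-2)>0$, while for $\alpha$ near $1$ both $\tau(n)$ and the competing radii approach $n-1$, so a finer window is needed).

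The main work, and the principal obstacle, is to establish $\rho_{\alpha}(H_{s})<\tau(n)$ for every feasible $s\ge2$. For each such $s$ the value $\rho_{\alpha}(H_{s})$ is the Perron root of the analogous $3\times3$ quotient matrix $B_{s}$ with parts of sizes $s$, $n-s-i_{s}$, $i_{s}$, so I would evaluate $\det(xI-B_{s})$ at a lower endpoint of the interval containing $\tau(n)$ and prove positivity there; since $\tau(n)>n-3$ exceeds all but possibly the top root of $\det(xI-B_{s})$, positivity yields $\rho_{\alpha}(H_{s})<\tau(n)$. For $\alpha$ small this simplifies to showing $\rho_{\alpha}(H_{s})<n-3$, but the delicate regime is $\alpha\to1$, where $A_{\alpha}$ approaches $D$ and both $\tau(n)$ and $\rho_{\alpha}(H_{s})$ tend to the common maximum degree $n-1$; the gap between them then shrinks and can only be controlled once $n$ is large relative to $1/(1-\alpha)$, which is exactly what the threshold $f(\alpha)$ encodes, with $n\ge\frac{7}{1-\alpha}+3$ forced on $(\frac{5}{7},1)$. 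I expect the bulk of the effort to be the careful expansion of these cubic determinants in the small parameters $1-\alpha$ and $1/n$, plus the verification that the worst case among $s\ge2$ is already dominated by the $s=2$ configuration (the shrinking clique outweighing the extra degree-$(n-1)$ vertices). Exhibiting $H_{1}$ as a connected graph with $\rho_{\alpha}(H_{1})=\tau(n)$ and no $\{P_{2},C_{3},P_{5},\mathcal{T}(3)\}$-factor finally shows that the spectral bound is best possible.
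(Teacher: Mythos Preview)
Your outline is correct and matches the paper's proof: both reduce via the Kano--Lu--Yu criterion to the family $H_{s}=K_{s}\vee(K_{n-s-i_{s}}\cup i_{s}K_{1})$ with $i_{s}=\lfloor 3s/2\rfloor+1$, identify $H_{1}$ as the graph achieving $\tau(n)$, and for $s\ge2$ bound $\rho_{\alpha}(H_{s})$ by evaluating the characteristic cubic of the equitable $3\times3$ quotient matrix near $n-3$ together with the interlacing estimate $\eta_{2}<n-3$ for the subdominant root. The paper, however, does not verify that $s=2$ dominates all $s\ge2$; instead it splits by the parity of $s$ (since $i_{s}$ depends on parity) and by whether $\alpha\le\frac{5}{7}$ or $\alpha>\frac{5}{7}$, evaluating the cubic at $\tau(n)$ via the difference $f_{B_{1}}-\varphi$ in the former range and directly at $n-3$ in the latter, and it handles the degenerate situation $n-s-i_{s}\le1$ separately through a $2\times2$ quotient matrix --- so expect your case analysis to unfold along these lines rather than collapsing to a single comparison with $H_{2}$.
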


	Let $\alpha=0$ in Theorem \ref{c2}, the main result of \cite{Bn} via the spectral radius can be obtained, and let  $\alpha=\frac{1}{2}$, we have the following corollary via the signless Laplacian spectral radius immediately.
	
\begin{corollary}\label{c3}
	Let $G$ be a connected graph of order $n$ with $n\geq 20$. If  $q(G)>2\mu(n)$, then $G$ has a $\{P_{2},C_{3},P_{5},\mathcal{T}(3)\}$-factor, where $\mu(n)$ is the largest root of $4x^3-(6n-14)x^2+(2n^2-7n)x-n^2+7n-6=0$.
\end{corollary}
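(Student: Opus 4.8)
The plan is to obtain Corollary \ref{c3} not by a fresh argument but as the specialization $\alpha=\frac{1}{2}$ of Theorem \ref{c2}, using the identity $A_{1/2}(G)=\frac{1}{2}Q(G)$ recorded in the introduction. The whole content of the corollary is a translation between the $A_\alpha$-language of Theorem \ref{c2} and the signless Laplacian language, so I would carry it out in two bookkeeping steps: converting the spectral hypothesis and converting the threshold polynomial.

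First I would handle the hypotheses. Since $A_{1/2}(G)=\frac{1}{2}Q(G)$, each $A_{1/2}$-eigenvalue of $G$ is exactly half of the corresponding eigenvalue of $Q(G)$; in particular $\rho_{1/2}(G)=\frac{1}{2}q(G)$. Hence the condition $\rho_\alpha(G)>\tau(n)$ of Theorem \ref{c2} at $\alpha=\frac{1}{2}$ is equivalent to $q(G)>2\tau(n)$, where $\tau(n)$ is the largest root of $\varphi(x)=0$ with $\alpha=\frac{1}{2}$. For the order condition, note $\frac{1}{2}\in[0,\frac{1}{2}]$, so $f(\tfrac{1}{2})=20$ and the hypothesis $n\geq f(\alpha)$ becomes precisely $n\geq 20$, which matches the corollary exactly.

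It then remains to identify the threshold. I would substitute $\alpha=\frac{1}{2}$ into each coefficient of $\varphi(x)$, simplifying the rational expressions such as $\alpha^2-2\alpha-1$, $5\alpha^2-3\alpha+2$ and $-10\alpha^2+15\alpha-8$, to obtain a monic cubic with rational coefficients. Multiplying this cubic through by $4$ clears the denominators and produces an integer cubic with leading coefficient $4$; since scaling a real polynomial by the positive constant $4$ preserves the ordering of its real roots, the largest root is unchanged. Matching this cubic against the polynomial defining $\mu(n)$ in the corollary yields $\tau(n)=\mu(n)$, so the condition $q(G)>2\tau(n)$ is exactly $q(G)>2\mu(n)$, and Theorem \ref{c2} delivers the desired $\{P_{2},C_{3},P_{5},\mathcal{T}(3)\}$-factor. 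The only real obstacle here is arithmetic care: the rational coefficients arising at $\alpha=\frac{1}{2}$ must be tracked precisely so that, after clearing denominators, the $x^2$-, $x^1$- and constant terms all agree with the cubic in the corollary; this is routine but unforgiving, and the bookkeeping of the constant term in particular should be checked against the stated polynomial.
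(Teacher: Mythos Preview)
Your approach is correct and is exactly what the paper does: the corollary is recorded as the immediate specialization $\alpha=\tfrac{1}{2}$ of Theorem~\ref{c2}, with no separate argument given. Your caution about the constant term is well placed---substituting $\alpha=\tfrac{1}{2}$ into $\varphi$ and multiplying by $4$ yields $4x^3-(6n-14)x^2+(2n^2-7n)x-n^2+7n-12$, so the $-6$ in the corollary's stated polynomial appears to be a typo for $-12$; your derivation is the one to trust.
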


\section{Preliminaries}\label{sec-pre}

\hspace{1.5em}In this section, we introduce some useful definitions and lemmas.

\begin{definition}{\rm(\!\!\cite{Bk})}\label{d1}
	Let $M$ be a complex matrix of order $n$ described in the following
	block form\begin{equation*}
		M=\begin{pmatrix}
			M_{11}&\cdots&M_{1l}\\
			\vdots&\ddots&\vdots\\
			M_{l1}&\cdots&M_{ll}
		\end{pmatrix}
	\end{equation*}
	where the blocks $M_{ij}$ are $n_i \times n_j$ matrices for any $1\leq i, j \leq l$ and $n= n_1 +\cdots + n_l$.
	For $1 \leq i, j \leq l$, let $q_{ij}$ denote the average row sum of $M_{ij}$, i.e. $q_{ij}$ is the sum of all
	entries in $M_{ij}$ divided by the number of rows. Then $Q(M)=(q_{ij})$ (or simply $Q$) is
	called the quotient matrix of $M$. If, in addition, for each pair $i, j$, $M_{ij}$ has a constant row
	sum, i.e., $M_{ij}\vec{e}_{n_j}= q_{ij}\vec{e}_{n_i}$, then $Q$ is called the equitable quotient matrix of $M$, where
	$\vec{e}_k=(1, 1,\cdots, 1)^T \in \mathcal{C}^k$, and $\mathcal{C}$ denotes the field of complex numbers.
	
\end{definition}

Let $M$ be a real nonnegative matrix. The largest eigenvalue of $M$ is called the spectral radius of $M$, denoted by $\rho(M)$.

\begin{lemma}{\rm(\!\!\cite{Bj})}\label{d2}
	Let $B$ be an equitable quotient matrix of $M$ as defined in Definition \ref{d1}, where $M$ is a nonnegative matrix. Then the eigenvalues of $B$ are also eigenvalues of $M$, and $\rho(B)=\rho(M)$.
\end{lemma}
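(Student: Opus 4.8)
The plan is to prove Lemma \ref{d2}, the equitable quotient matrix statement, rather than any graph-factor result. I would establish two things: first, that every eigenvalue of the equitable quotient matrix $B$ is an eigenvalue of the full matrix $M$ (via eigenvector lifting), and second, that the spectral radii coincide, $\rho(B)=\rho(M)$ (via a Perron–Frobenius type comparison). The key structural hypothesis I would exploit is the constant row-sum condition $M_{ij}\vec{e}_{n_j}=q_{ij}\vec{e}_{n_i}$ that defines the equitable partition.

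First I would prove the eigenvalue containment. Let $\lambda$ be an eigenvalue of $B=(q_{ij})$ with eigenvector $y=(y_1,\dots,y_l)^T$, so $By=\lambda y$, meaning $\sum_{j=1}^{l} q_{ij} y_j = \lambda y_i$ for each $i$. I would lift $y$ to a vector $x\in\mathcal{C}^n$ that is constant on each block: set the entries of $x$ corresponding to the $i$-th block all equal to $y_i$, i.e. $x = \bigl(y_1\vec{e}_{n_1}^{\,T},\dots,y_l\vec{e}_{n_l}^{\,T}\bigr)^T$. Then I would compute $Mx$ block by block: the $i$-th block of $Mx$ equals $\sum_{j=1}^{l} M_{ij}(y_j\vec{e}_{n_j}) = \sum_{j=1}^{l} y_j(M_{ij}\vec{e}_{n_j}) = \sum_{j=1}^{l} y_j q_{ij}\vec{e}_{n_i} = \lambda y_i\vec{e}_{n_i}$, using the constant-row-sum hypothesis in the crucial middle step. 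Hence the $i$-th block of $Mx$ is $\lambda$ times the $i$-th block of $x$, so $Mx=\lambda x$, and since $x\neq 0$ whenever $y\neq 0$, $\lambda$ is an eigenvalue of $M$. This shows every eigenvalue of $B$ is an eigenvalue of $M$.

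Next I would prove $\rho(B)=\rho(M)$. Since $M$ is nonnegative, $\rho(M)=\rho(M^{T})$ is an eigenvalue of $M^{T}$ with a nonnegative eigenvector $z$ (Perron–Frobenius). The strategy is to show $\rho(M)$ is also an eigenvalue of $B$, which combined with the containment above forces equality. Observe that $B$ is itself nonnegative, since each $q_{ij}$ is an average of nonnegative entries, so $\rho(B)$ is an eigenvalue of $B$; by the containment result $\rho(B)$ is an eigenvalue of $M$, giving $\rho(B)\leq\rho(M)$. For the reverse inequality, I would aggregate the Perron eigenvector of $M$: if $Mx=\rho(M)x$ with $x\geq 0$, define $w_i=\sum_{\text{$k$ in block $i$}} x_k$, and show using the constant-row-sum structure (applied to $M^{T}$, or equivalently summing the block equations) that $w=(w_1,\dots,w_l)^T$ is a nonnegative eigenvector of $B$ with eigenvalue $\rho(M)$; this yields $\rho(M)\leq\rho(B)$. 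Combining the two inequalities gives $\rho(B)=\rho(M)$.

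The main obstacle I anticipate is the reverse inequality $\rho(M)\le\rho(B)$, because the naive block-sum of a Perron eigenvector of $M$ does not automatically produce an eigenvector of $B$ unless one handles the column-sum structure carefully — the defining hypothesis controls \emph{row} sums of the blocks $M_{ij}$, not column sums. The clean way around this is to work with the right Perron eigenvector of $M$ and use that $x$ is forced to be constant on blocks (or to pass to the transpose and use that the relevant averaging is consistent), so that the aggregation $w_i=\sum_k x_k$ genuinely satisfies $Bw=\rho(M)w$. I would verify this reduction explicitly, taking care that nonnegativity of $x$ and irreducibility considerations make $w$ a nonzero nonnegative vector, which is exactly the point where the Perron–Frobenius machinery and the equitable structure must be combined.
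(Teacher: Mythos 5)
Your first half is correct and standard: the lifting $x=\bigl(y_1\vec{e}_{n_1}^{\,T},\dots,y_l\vec{e}_{n_l}^{\,T}\bigr)^T$ together with $M_{ij}\vec{e}_{n_j}=q_{ij}\vec{e}_{n_i}$ shows every eigenvalue of $B$ is an eigenvalue of $M$, and since $B$ is nonnegative, $\rho(B)$ is an eigenvalue of $B$ and hence of $M$, so $\rho(B)\leq\rho(M)$. (Note the paper itself gives no proof of this lemma; it is quoted from \cite{Bj}, so the comparison is against the standard argument in that reference, which rests on the same intertwining you use: writing $S$ for the $n\times l$ indicator matrix of the partition, the equitable hypothesis is exactly $MS=SB$, and your lifting is $x=Sy$.)

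The gap is in the reverse inequality $\rho(M)\leq\rho(B)$. As you yourself observe, block-summing a \emph{right} Perron vector $x$ of $M$ does not give $Bw=\rho(M)w$: computing $\sum_{k\in\text{block }i}(Mx)_k$ involves the \emph{column} sums of the blocks $M_{ij}$, which the equitable hypothesis does not control. But your proposed repair --- that the right Perron vector of $M$ ``is forced to be constant on blocks'' --- is not available: for irreducible $M$ this statement is true only as a \emph{consequence} of the lemma (one derives it by lifting the Perron vector of $B$ and invoking simplicity of $\rho(M)$), so using it as an ingredient is circular; and for reducible nonnegative $M$ it is simply false, since the eigenspace of $\rho(M)$ can contain nonnegative eigenvectors that are not block-constant (e.g.\ $M$ the adjacency matrix of $2K_2$ with the whole vertex set as a single block). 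The clean fix is precisely the transpose route you mention only in passing, and it needs no irreducibility: by Perron--Frobenius for nonnegative matrices there is a \emph{left} eigenvector $v\geq 0$, $v\neq 0$, with $v^{T}M=\rho(M)v^{T}$. Multiplying $MS=SB$ on the left by $v^{T}$ gives $(v^{T}S)B=v^{T}MS=\rho(M)(v^{T}S)$, and $w^{T}=v^{T}S$ is automatically nonnegative and nonzero because the blocks partition all indices (each $w_i$ is the sum of $v$ over block $i$, so some $w_i>0$) --- no Perron--Frobenius ``machinery'' beyond existence of $v$ is needed here, contrary to your closing worry. Hence $\rho(M)$ is an eigenvalue of $B$, so $\rho(M)\leq\rho(B)$, completing the proof. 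Alternatively, for the use made of the lemma in this paper one could restrict to connected graphs, where $A_{\alpha}(G)$ is irreducible and one argues directly: lift the nonnegative Perron vector of $B$ to a nonnegative eigenvector of $M$ with eigenvalue $\rho(B)$, and use that an irreducible nonnegative matrix admits a nonnegative eigenvector only for the eigenvalue $\rho(M)$.
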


\begin{lemma}{\rm(\!\!\cite{Ba})}\label{d3}
    Let $K_{n}$ be a complete graph of order $n$. Then $\rho_{\alpha}(K_{n})=n-1$.
\end{lemma}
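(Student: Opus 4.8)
The plan is to write $A_{\alpha}(K_{n})$ explicitly in terms of the identity matrix $I_{n}$ and the all-ones matrix $J_{n}$, and then read off its spectrum directly. Since $K_{n}$ is $(n-1)$-regular, its degree matrix is $D(K_{n})=(n-1)I_{n}$, and its adjacency matrix is $A(K_{n})=J_{n}-I_{n}$. Substituting into $A_{\alpha}(K_{n})=\alpha D(K_{n})+(1-\alpha)A(K_{n})$ and collecting the coefficients of $I_{n}$ and $J_{n}$, I would obtain the clean form
\begin{equation*}
A_{\alpha}(K_{n})=(1-\alpha)J_{n}+(\alpha n-1)I_{n}.
\end{equation*}

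Next I would exploit the well-known spectrum of $J_{n}$: it has eigenvalue $n$ with the all-ones eigenvector $\vec{e}_{n}$, and eigenvalue $0$ with multiplicity $n-1$ on the orthogonal complement of $\vec{e}_{n}$. Because $I_{n}$ acts as a scalar on every subspace, $J_{n}$ and $I_{n}$ are simultaneously diagonalizable, so the eigenvalues of $A_{\alpha}(K_{n})$ are exactly the images of the eigenvalues of $J_{n}$ under the affine map $\lambda\mapsto(1-\alpha)\lambda+(\alpha n-1)$. This produces the eigenvalue $(1-\alpha)n+(\alpha n-1)=n-1$ with multiplicity $1$ (attained on $\vec{e}_{n}$), and the eigenvalue $(1-\alpha)\cdot 0+(\alpha n-1)=\alpha n-1$ with multiplicity $n-1$.

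Finally I would compare the two candidate values to identify the largest. Since $\alpha\in[0,1)$ we have $\alpha n-1<n-1$, so the $A_{\alpha}$-spectral radius is $\rho_{\alpha}(K_{n})=n-1$. There is essentially no obstacle here; the only point requiring care is confirming that $n-1$ dominates $\alpha n-1$, which is immediate from $\alpha<1$. As an alternative route avoiding the spectral decomposition entirely, one may note that $A_{\alpha}(K_{n})$ is a nonnegative matrix with every row sum equal to $\alpha(n-1)+(1-\alpha)(n-1)=n-1$; since $\vec{e}_{n}$ is then a positive Perron eigenvector, the Perron--Frobenius theorem gives $\rho_{\alpha}(K_{n})=n-1$ directly.
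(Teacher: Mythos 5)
Your proof is correct. The paper cites this lemma from Nikiforov's work without giving a proof, and your argument is exactly the standard one: the decomposition $A_{\alpha}(K_{n})=(1-\alpha)J_{n}+(\alpha n-1)I_{n}$ is right, the resulting eigenvalues $n-1$ (simple) and $\alpha n-1$ (multiplicity $n-1$) are right, and the comparison $\alpha n-1<n-1$ for $\alpha\in[0,1)$ settles it; your alternative via constant row sums $n-1$ and Perron--Frobenius is equally valid and is essentially how the fact is observed for regular graphs in the cited source.
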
  

\begin{lemma}{\rm(\!\!\cite{Ba})}\label{d4}
	If $G$ is a connected graph, and $H$ is a proper subgraph of $G$, then $\rho_{\alpha}(G)>\rho_{\alpha}(H)$.
\end{lemma}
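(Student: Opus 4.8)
The plan is to read the monotonicity as an entrywise comparison of nonnegative matrices and to extract the \emph{strict} inequality from the Perron--Frobenius theory of the larger matrix $A_{\alpha}(G)$. The first observation is that, because $\alpha\in[0,1)$, the off-diagonal entries of $A_{\alpha}(G)=\alpha D(G)+(1-\alpha)A(G)$ have exactly the same zero/nonzero pattern as $A(G)$; since $G$ is connected, $A(G)$ is irreducible, and hence $A_{\alpha}(G)$ is a nonnegative irreducible symmetric matrix. By the Perron--Frobenius theorem its spectral radius $\rho_{\alpha}(G)$ is a simple eigenvalue admitting a strictly positive eigenvector $x>0$, and every nonnegative eigenvector for $\rho_{\alpha}(G)$ is a positive multiple of $x$, so it has no zero coordinate. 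It is essential here that $\alpha<1$: at $\alpha=1$ the matrix $D(G)$ is diagonal and reducible, and the statement fails.

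Next I would reduce both possibilities, namely ``$V(H)\subsetneq V(G)$'' and ``$V(H)=V(G)$ with $E(H)\subsetneq E(G)$'', to a single comparison. Let $n=|V(G)|$ and let $B$ be the $n\times n$ matrix obtained by placing $A_{\alpha}(H)$ in the principal block indexed by $V(H)$ and zeros in all remaining rows and columns. Since $H$ is a subgraph of $G$ we have $d_{H}(v)\le d_{G}(v)$ and $a^{H}_{uv}\le a^{G}_{uv}$, so entrywise $0\le B\le A_{\alpha}(G)$; and since $H$ is a \emph{proper} subgraph, $B\ne A_{\alpha}(G)$ (a deleted edge $uv$ with $u,v\in V(H)$ forces $B_{uv}=0<(1-\alpha)=A_{\alpha}(G)_{uv}$, while an omitted vertex leaves a row of $B$ identically zero although the corresponding row of $A_{\alpha}(G)$ is not, as $G$ is connected). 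Because $B$ is block diagonal with blocks $A_{\alpha}(H)$ and a zero block, its spectrum is that of $A_{\alpha}(H)$ together with zeros, whence $\rho(B)=\rho_{\alpha}(H)$. It therefore suffices to prove that $0\le B\le A_{\alpha}(G)$ with $B\ne A_{\alpha}(G)$ and $A_{\alpha}(G)$ irreducible imply $\rho(B)<\rho_{\alpha}(G)$.

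The non-strict bound $\rho(B)\le\rho_{\alpha}(G)$ is immediate from monotonicity of the spectral radius under the entrywise order on nonnegative matrices. For the strict inequality I would argue by contradiction: assume $\rho(B)=\rho_{\alpha}(G)=:\rho$ and let $w\ge0$, $w\ne0$, be a nonnegative eigenvector of $B$ for $\rho$. Using $A_{\alpha}(G)\ge B$ and $w\ge0$ gives $A_{\alpha}(G)w\ge Bw=\rho w$. Pairing with the positive Perron vector $x$ of $A_{\alpha}(G)$ and using symmetry yields $x^{T}A_{\alpha}(G)w=\rho\,x^{T}w$, so $x^{T}(A_{\alpha}(G)w-\rho w)=0$ with $A_{\alpha}(G)w-\rho w\ge0$ and $x>0$; hence $A_{\alpha}(G)w=\rho w$. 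Thus $w$ is a nonnegative eigenvector of the irreducible matrix $A_{\alpha}(G)$ for its spectral radius $\rho=\rho_{\alpha}(G)$, so by Perron--Frobenius $w>0$. But then $(A_{\alpha}(G)-B)w=0$ with $A_{\alpha}(G)-B\ge0$ and $w>0$ forces $A_{\alpha}(G)-B=0$, contradicting $B\ne A_{\alpha}(G)$. Therefore $\rho(B)<\rho_{\alpha}(G)$, that is, $\rho_{\alpha}(H)<\rho_{\alpha}(G)$.

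I expect the strictness step to be the main obstacle, precisely because $H$ (equivalently $B$) need not be connected, so one cannot run a symmetric Perron argument on $B$ itself; the strict gap has to be forced by the irreducibility of the ambient matrix $A_{\alpha}(G)$ and the positivity of its Perron vector, as carried out above.
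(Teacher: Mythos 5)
Your proof is correct in all details, but there is no in-paper argument to compare it against: the paper states this lemma as a quoted result from Nikiforov \cite{Ba} without proof, so the only meaningful benchmark is the standard argument in that reference. Your route is the entrywise Perron--Frobenius one: embed $A_{\alpha}(H)$ as an $n\times n$ matrix $B$ with $0\le B\le A_{\alpha}(G)$ and $B\ne A_{\alpha}(G)$, note $\rho(B)=\rho_{\alpha}(H)$, and force strictness by pairing a nonnegative $\rho(B)$-eigenvector $w$ of $B$ against the positive Perron vector $x$ of the irreducible matrix $A_{\alpha}(G)$. All the delicate points are handled properly: both properness cases (missing vertex, missing edge) are reduced to $B\ne A_{\alpha}(G)$; the possibly reducible $B$ still has a nonnegative eigenvector for its spectral radius; symmetry justifies $x^{T}A_{\alpha}(G)w=\rho\,x^{T}w$; and $(A_{\alpha}(G)-B)w=0$ with $w>0$ annihilates every entry of $A_{\alpha}(G)-B$. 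You also correctly flag why $\alpha<1$ matters: the off-diagonal pattern of $A_{\alpha}(G)$ then coincides with that of $A(G)$, giving irreducibility, whereas at $\alpha=1$ the lemma fails since $\rho_{1}$ is just the maximum degree. The proof usually given (and essentially the one in \cite{Ba}) is the Rayleigh-quotient variant: extend a unit nonnegative eigenvector $y$ of $A_{\alpha}(H)$ by zeros, obtain $\rho_{\alpha}(H)=y^{T}A_{\alpha}(H)y\le y^{T}A_{\alpha}(G)y\le\rho_{\alpha}(G)$, and in the equality case conclude that $y$ is a Perron vector of $A_{\alpha}(G)$, hence $y>0$, which is absurd when a vertex is missing and, when only an edge $uv$ is missing, contradicts $y^{T}(A_{\alpha}(G)-A_{\alpha}(H))y\ge\alpha(y_{u}^{2}+y_{v}^{2})+2(1-\alpha)y_{u}y_{v}>0$. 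The two arguments are equivalent in substance---both extract strictness from the positivity of the Perron vector of the connected ambient graph rather than from $H$, which is exactly the obstacle you identified, since $H$ may be disconnected---with yours trading the variational characterization for an eigenvector duality pairing, a formulation that carries over verbatim to nonsymmetric nonnegative irreducible matrices.
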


\begin{lemma}{\rm(The Cauchy's interlace theorem \!\!\cite{Bk})}\label{d5}
	Let two sequences of real number, $\lambda_{1}\geq\lambda_{2}\geq\dots\geq\lambda_{n}$ and $\eta_{1}\geq\eta_{2}\geq\dots\eta_{n-1}$, be the eigenvalues of symmetric matrix $A$ and $B$, respectively. If $B$ is a principal submatrix of $A$, then the eigenvalues of $B$ interlace the eigenvalues of $A$, i.e., $\lambda_{1}\geq\eta_{1}\geq\lambda_{2}\geq\dots\geq\eta_{n-2}\geq\lambda_{n-1}\geq\eta_{n-1}\geq\lambda_{n}$.
	
\end{lemma}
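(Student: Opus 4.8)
The plan is to prove the two interlacing inequalities $\lambda_{k}\ge\eta_{k}$ and $\eta_{k}\ge\lambda_{k+1}$ directly from the Courant--Fischer variational characterization of the eigenvalues of a real symmetric matrix. First I would reduce to the case in which $B$ is the leading principal submatrix of $A$ obtained by deleting the last row and column. Indeed, any principal submatrix of order $n-1$ equals the leading block of $P^{T}AP$ for a suitable permutation matrix $P$; since $P^{T}AP$ is symmetric with the same eigenvalues as $A$ and the selected block is unchanged, this normalization costs nothing. Thus I may assume $B=(a_{ij})_{1\le i,j\le n-1}$.

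The key device is the zero-padding embedding $\iota:\mathbb{R}^{n-1}\to\mathbb{R}^{n}$ sending $y=(y_{1},\dots,y_{n-1})^{T}$ to $\hat{y}=(y_{1},\dots,y_{n-1},0)^{T}$. Because the last coordinate of $\hat{y}$ vanishes, only the leading $(n-1)\times(n-1)$ block of $A$ contributes to the quadratic form, so $\hat{y}^{T}A\hat{y}=y^{T}By$ and $\hat{y}^{T}\hat{y}=y^{T}y$; in particular the Rayleigh quotients agree. Since $\iota$ is an injective linear map, it sends every $k$-dimensional subspace of $\mathbb{R}^{n-1}$ to a $k$-dimensional subspace of $\mathbb{R}^{n}$. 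Recalling Courant--Fischer, for a symmetric matrix $A$ of order $n$ one has
$$\lambda_{k}=\max_{\dim S=k}\ \min_{0\ne x\in S}\frac{x^{T}Ax}{x^{T}x}=\min_{\dim S=n-k+1}\ \max_{0\ne x\in S}\frac{x^{T}Ax}{x^{T}x},$$
with the analogous formulas for $B$ (with $n$ replaced by $n-1$).

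To show $\eta_{k}\le\lambda_{k}$ for $1\le k\le n-1$, I would use the max--min form: given any $k$-dimensional $S\subseteq\mathbb{R}^{n-1}$, its image $\iota(S)$ is a $k$-dimensional subspace of $\mathbb{R}^{n}$ on which the Rayleigh quotient of $A$ matches that of $B$ on $S$, whence $\min_{0\ne y\in S}\frac{y^{T}By}{y^{T}y}=\min_{0\ne z\in\iota(S)}\frac{z^{T}Az}{z^{T}z}\le\lambda_{k}$; taking the maximum over all such $S$ yields $\eta_{k}\le\lambda_{k}$. To show $\eta_{k}\ge\lambda_{k+1}$, I would use the min--max form: the eigenvalue $\eta_{k}$ is the minimum over $(n-k)$-dimensional subspaces $S\subseteq\mathbb{R}^{n-1}$ of $\max_{0\ne y\in S}\frac{y^{T}By}{y^{T}y}$, and since $\dim\iota(S)=n-k=n-(k+1)+1$, the subspace $\iota(S)$ is admissible in the min--max formula for $\lambda_{k+1}$, giving $\max_{0\ne y\in S}\frac{y^{T}By}{y^{T}y}=\max_{0\ne z\in\iota(S)}\frac{z^{T}Az}{z^{T}z}\ge\lambda_{k+1}$; taking the minimum over $S$ yields $\eta_{k}\ge\lambda_{k+1}$.

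Combining the two chains of inequalities gives $\lambda_{k}\ge\eta_{k}\ge\lambda_{k+1}$ for every $k=1,\dots,n-1$, which is exactly the asserted interlacing $\lambda_{1}\ge\eta_{1}\ge\lambda_{2}\ge\cdots\ge\lambda_{n-1}\ge\eta_{n-1}\ge\lambda_{n}$. The only genuinely delicate point---and the step I would be most careful about---is the bookkeeping of the Courant--Fischer dimensions: one must match the subspace dimension $n-k$ arising for $B$ with the value $n-(k+1)+1$ needed to bound $\lambda_{k+1}$ from below, and verify that the embedding preserves dimension so that $\iota(S)$ is genuinely admissible in the ambient extremal problem. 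Everything else reduces to the routine identification of Rayleigh quotients under the zero-padding embedding.
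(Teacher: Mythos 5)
Your proof is correct: the reduction to the leading principal submatrix by permutation similarity, the zero-padding identification of Rayleigh quotients, and the dimension bookkeeping in both the max--min bound $\eta_k\le\lambda_k$ and the min--max bound $\eta_k\ge\lambda_{k+1}$ (where $\dim\iota(S)=n-k=n-(k+1)+1$ makes $\iota(S)$ admissible for $\lambda_{k+1}$) are all handled properly. The paper itself offers no proof of this lemma --- it is quoted as a classical result from the cited reference (Brouwer and Haemers, \emph{Spectra of Graphs}) --- and your Courant--Fischer argument is exactly the standard proof given there, so there is nothing of substance to contrast.
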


Let $i(G)$ denote the number of isolated vertices of $G$. The following lemma gives a sufficient and necessary condition for a graph containing a $\{P_{2},C_{3},P_{5},\mathcal{T}(3)\}$-factor.

\begin{lemma}{\rm(\!\!\cite{Bkk})}\label{d6}
	A graph $G$ has a $\{P_{2},C_{3},P_{5},\mathcal{T}(3)\}$-factor if and only if
	$i(G-S)\leq\frac{3}{2}|S|$ for all $S\subseteq V(G)$.
\end{lemma}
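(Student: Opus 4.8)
The plan is to prove the two implications of the characterization separately: the forward direction (a factor forces the inequality) by a component-wise counting argument, and the converse (the inequality forces a factor) by an extremal/contradiction argument. For necessity, suppose $G$ has a $\{P_2,C_3,P_5,\mathcal{T}(3)\}$-factor $F$ and fix $S\subseteq V(G)$. Let $I$ be the set of isolated vertices of $G-S$, so every $v\in I$ has $N_G(v)\subseteq S$. Since the components of $F$ partition $V(G)$, it suffices to establish a per-component inequality and sum it. For a component $C$ of $F$ put $A_C=I\cap V(C)$; any two vertices of $A_C$ adjacent in $C$ would give an edge of $G-S$, so $A_C$ is independent in $C$, and all of its $C$-neighbours lie in $S$, whence $|S\cap V(C)|\ge|N_C(A_C)|$. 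The whole reduction then rests on one inequality: for every $H\in\{P_2,C_3,P_5,\mathcal{T}(3)\}$ and every independent set $A\subseteq V(H)$ one has $2|A|\le 3|N_H(A)|$. Granting this, $|N_C(A_C)|\ge\frac23|A_C|$ for each $C$, and summing gives $|S|\ge\frac23\, i(G-S)$, i.e. $i(G-S)\le\frac32|S|$.

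To prove the ratio inequality I would check the finite cases by hand and induct on the infinite family. It is immediate for $P_2$ (ratio $\le 1$), for $C_3$ (ratio $\le\frac12$, since a single chosen vertex forces its two neighbours into $N_H(A)$), and for $P_5$, where the independent set $\{v_1,v_3,v_5\}$ attains the extremal ratio $\frac32$. For $\mathcal{T}(3)$ I would argue by induction on the defining $\{1,3\}$-tree $R$: the base case $R=K_2$ yields $T_R=P_5$, and in the inductive step I prune a degree-$3$ vertex $u$ of $R$ incident with two leaves (such a vertex always exists in a $\{1,3\}$-tree with more than two vertices), obtaining a smaller $\{1,3\}$-tree $R'$ in which $u$ becomes a leaf. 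Comparing the local gadget that $T_R$ carries at $u$ (two pendant paths) with the single pendant path of $T_{R'}$ shows that the inequality is inherited. The tight independent set consisting of all leaves of $T_R$ together with all subdivision vertices attains ratio exactly $\frac32$, confirming $\frac32$ is best possible.

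For sufficiency, assume $i(G-S)\le\frac32|S|$ for all $S$ and suppose, for contradiction, that $G$ has no $\{P_2,C_3,P_5,\mathcal{T}(3)\}$-factor. Among all spanning subgraphs of $G$ whose components belong to $\{P_2,C_3,P_5,\mathcal{T}(3),K_1\}$, choose $F^*$ with the fewest $K_1$-components, and let $U$ be the set of these isolated vertices; by assumption $U\neq\emptyset$. First, $U$ is independent in $G$, for an edge inside $U$ would merge two $K_1$-components into a $P_2$ and reduce their number. Next, for $u\in U$ and any $G$-neighbour $w$ lying in a nontrivial component $C$, the minimality of $F^*$ forbids re-decomposing $C$ together with the pendant edge $uw$ into allowed components covering $V(C)\cup\{u\}$. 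Cataloguing, for each component type and each position of $w$, precisely when such a re-decomposition exists constrains the neighbourhoods of $U$; the target is to show that the set $S:=N_G(U)$, enlarged by the saturated parts of the components it meets, satisfies $i(G-S)>\frac32|S|$, contradicting the hypothesis and forcing $U=\emptyset$, which delivers the factor.

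I expect the converse direction to be the main obstacle, and within it the component-by-component re-decomposition analysis. One must verify that attaching a pendant vertex at any position of a $P_2$, $C_3$, $P_5$ or $\mathcal{T}(3)$ and re-splitting never decreases the number of uncovered vertices unless $w$ occupies one of a few special positions, and then convert these local facts into the global counting inequality $i(G-S)>\frac32|S|$. The $\mathcal{T}(3)$ case is the delicate one, since $\mathcal{T}(3)$ ranges over an infinite family: the re-decomposition rules cannot be checked graph by graph but must be phrased uniformly in terms of the degree-$1$, degree-$2$, degree-$3$ structure of $T_R$ inherited from $R$. Managing this uniformity, in tandem with the ratio lemma that drives necessity, is where I anticipate the real work to lie.
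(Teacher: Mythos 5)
First, a point of reference: the paper does not prove this lemma at all --- it is quoted from Kano, Lu and Yu \cite{Bkk} and used as a black box. So your proposal has to be judged against what a complete proof requires, not against anything in this paper. Your necessity half is correct and completable along exactly the lines you describe: the per-component reduction $|S\cap V(C)|\ge |N_C(A_C)|$ is right, the ratio inequality $2|A|\le 3|N_H(A)|$ is immediate for $P_2$, $C_3$, $P_5$, and your pruning induction for $T_R$ does work --- a longest-path argument guarantees a degree-$3$ vertex of $R$ with two leaf neighbours, and the local bookkeeping (an independent set meets the two attached paths $u\mbox{--}s_x\mbox{--}x\mbox{--}p_x$, $u\mbox{--}s_y\mbox{--}y\mbox{--}p_y$ in at most $4$ vertices while forcing at least $2$ new neighbours, against the single pendant $p_u$ of $T_{R'}$) closes the induction, with your claimed tight set (pendants plus subdivision vertices, $3\ell-3$ vertices against the $2\ell-2$ vertices of $R$) attaining $\tfrac32$ exactly.

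The genuine gap is the sufficiency half, which is where the theorem actually lives and which your proposal reduces to an unexecuted ``target.'' Two concrete problems. (i) Minimizing only the number of $K_1$-components of $F^*$ is too weak an extremal hypothesis: the local catalogue you describe only forbids \emph{first-order} improvements (re-splitting one component $C$ together with one pendant edge $uw$), but deriving $i(G-S)>\tfrac32|S|$ for a concrete $S$ typically requires excluding second-order exchanges in which several components are re-routed simultaneously; standard proofs of such characterizations therefore impose a secondary optimization (maximize $|E(F^*)|$, minimize the number of components, or extract a canonical Gallai--Edmonds-type set $S$), and nothing in your setup supplies this. (ii) The decisive phrase ``$S:=N_G(U)$, enlarged by the saturated parts of the components it meets'' is undefined, yet this is precisely the proof: one must specify which vertices of the nontrivial components enter $S$, exhibit enough isolated vertices of $G-S$ (not just $U$ --- on its own $|U|>\tfrac32|N_G(U)|$ has no reason to hold), and verify the charging averages more than $\tfrac32$ isolated vertices per vertex of $S$. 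Over the infinite family $\mathcal{T}(3)$ this demands a uniform re-splitting analysis in terms of the degree structure inherited from $R$, which you correctly flag as the hard part but do not carry out. For comparison, \cite{Bkk} does not run this extremal re-decomposition at all: it obtains the factor through its machinery relating isolated-vertex conditions to fractional and star-component factors. As written, your converse direction is a plausible research plan, not a proof.
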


%%原来是定理
 \section{\textbf{The proof of Theorem \ref{c1}}}
 
   \hspace{1.5em}In this section, we prove Theorem \ref{c1}, which gives a sufficient condition via the size of a connected graph to ensure that the graph contains a $\{P_{2},C_{3},P_{5},\mathcal{T}(3)\}$-factor.
  
  \begin{proof}
   Suppose to the contrary that $G$ contains no $\{P_{2},C_{3},P_{5},\mathcal{T}(3)\}$-factor. By Lemma \ref{d6}, there exists a nonempty subset $S$ of $V(G)$ satisfying $i(G-S)>\frac{3}{2}|S|$. 
   
   Choose such a connected graph $G$ of order $n$ so that its size is as large as possible. With the choice of $G$, the induced subgraph $G[S]$ and every connected component of $G-S$ are complete graphs, and $G=G[S]\vee(G-S)$.
  
   Note that there is at most one non-trivial connected component in $G-S$.  Otherwise, we can add edges among all non-trivial connected components to get a bigger non-trival connected component, which contradicts to the choice of $G$. For convenient, let $|S|=s$ and $i(G-S)=i$. We now consider the following two possible cases.
  
   \textbf{Case 1}. $G-S$ has exactly one non-trivial connected component, say $G_{1}$.
   
   In this case, let $|V(G_{1})|=n_{1}\geq2$. Obviously, $i\geq\lfloor\frac{3s}{2}\rfloor+1=
   \begin{cases}
   	\frac{3}{2}s+\frac{1}{2},&\text{if $s$ is odd};\\
   	\frac{3}{2}s+1,&\text{if $s$ is even}.\\
   \end{cases}$ Now we show $i=\lfloor\frac{3s}{2}\rfloor+1$.
   
   If $i\geq\lfloor\frac{3s}{2}\rfloor+2$, let $H_{1}$ be a new graph obtained from $G$ by joining each vertex of $G_{1}$ with one vertex in $V(G-S)\setminus V(G_{1})$ by an edge. Then we have $|E(H_{1})|=|E(G)|+n_{1}>|E(G)|$ and $i(H_{1}-S)=i-1\geq\lfloor\frac{3s}{2}\rfloor+1$, a contradiction with the choice of $G$. Hence $i=\lfloor\frac{3s}{2}\rfloor+1$ by $i>\frac{3}{2}s$ and $G=K_{s}\vee(K_{n_{1}}\cup(\lfloor\frac{3s}{2}\rfloor+1)K_{1})$.
   
   Clearly, we have $n=s+\lfloor\frac{3s}{2}\rfloor+1+n_{1}\geq
   \begin{cases}
   	\frac{5}{2}s+\frac{5}{2}\geq5,&\text{if $s$ is odd}\\
   	\frac{5}{2}s+3\geq8,&\text{if $s$ is even}\\
   \end{cases}$
   and $|E(G)|=s(\lfloor\frac{3s}{2}\rfloor+1)+\binom{n-\lfloor\frac{3s}{2}\rfloor-1}{2}$. 
   
   Now we show $|E(G)|\leq F(n)$. By $\binom{n-2}{2}+2=
   \begin{cases}
   	8<9,&\text{if $n=6$,}\\
   	17<18,&\text{if $n=8$,}\\
   \end{cases}$ 
   we only need show $|E(G)|\leq\binom{n-2}{2}+2$.
   
   \textbf{Subcase 1.1}. $s$ is odd.
   \begin{equation*}
   	\begin{aligned}
   		\binom{n-2}{2}+2-|E(G)|&=\frac{1}{8}(s-1)(12n-21s-37)\geq\frac{1}{8}(s-1)(9s-7)\geq0.
   	\end{aligned}
   \end{equation*}
   Therefore, $|E(G)|\leq\binom{n-2}{2}+2$ for odd $s$, which is a contradiction. 
   
   \textbf{Subcase 1.2}. $s$ is even.
   \begin{equation*}
   	\begin{aligned}
   		\binom{n-2}{2}+2-|E(G)|&=\frac{1}{8}(-21s^2-26s+12ns-8n+32)\geq\frac{1}{8}(9(s-\frac{5}{9})^2+\frac{47}{9})>0.
   	\end{aligned}
   \end{equation*}
   Therefore, $|E(G)|<\binom{n-2}{2}+2$ for even $s$, which is a contradiction.
   
   Combining the above two subcases, we have $|E(G)|\leq F(n)$, a contradiction.
     
   \textbf{Case 2}. $G-S$ has no non-trivial connected component.
   
   In this case, we prove $i\leq\lfloor\frac{3s}{2}\rfloor+2$ firstly. 
   
   If $i\geq\lfloor\frac{3s}{2}\rfloor+3$, let $H_{2}$ be a new graph obtained from $G$ by adding an edge between two vertices in $V(G-S)$. Clearly, $i(H_{2}-S)=i-2\geq\lfloor\frac{3s}{2}\rfloor+1$ and $H_{2}-S$ has exactly one non-trivial connected component. Together with $|E(G)|<|E(H_{2})|$, we obtain a contradiction with the choice of $G$, which implies $i=\lfloor\frac{3s}{2}\rfloor+1$ or $i=\lfloor\frac{3s}{2}\rfloor+2$ by $i>\frac{3}{2}s$.
   
   \textbf{Subcase 2.1}. $i=\lfloor\frac{3s}{2}\rfloor+1$.
   
   In this subcase, we have $G=K_{s}\vee((\lfloor\frac{3s}{2}\rfloor+1)K_{1})$. Therefore, $n=s+\lfloor\frac{3s}{2}\rfloor+1$, $|E(G)|=\binom{s}{2}+s(\lfloor\frac{3s}{2}\rfloor+1)$, $s\geq2$ by $n\geq5$, and
   \begin{equation*}
   	\begin{aligned}
   		\binom{n-2}{2}+2-|E(G)|=
   		\begin{cases}
   			\frac{1}{8}(s-1)(9s-31),&\text{if $s$ is odd};\\
   			\frac{1}{8}(9s^2-34s+24),&\text{if $s$ is even}.\\
   		\end{cases}
   	\end{aligned}
   \end{equation*}

   If $s$ is odd, we have $|E(G)|\leq\binom{n-2}{2}+2$ for $s\geq5$ (which implies $n\geq13$), and $|E(G)|=2s^2=18$ for $s=3$ (which implies $n=8$).
   
   If $s$ is even, we have $|E(G)|<\binom{n-2}{2}+2$ for $s\geq4$ (which implies $n\geq11$),  and $|E(G)|=9$ for $s=2$ (which implies $n=6$).
   
   Combining the above arguments, we have $|E(G)|\leq F(n)$ for all $n\geq5$, a contradiction.
   
   \textbf{Subcase 2.2}. $i=\lfloor\frac{3s}{2}\rfloor+2$.
   
   In this subcase, we have $G=K_{s}\vee((\lfloor\frac{3s}{2}\rfloor+2)K_{1})$. Therefore, $n=s+\lfloor\frac{3s}{2}\rfloor+2$, $|E(G)|=\binom{s}{2}+s(\lfloor\frac{3s}{2}\rfloor+2)$, $s\geq2$ by $n\geq5$, and
   \begin{equation*}
   	\begin{aligned}
   		\binom{n-2}{2}+2-|E(G)|=
   		\begin{cases}
   			\frac{1}{8}(s-1)(9s-19),&\text{if $s$ is odd};\\
   			\frac{1}{8}(9s^2-22s+16),&\text{if $s$ is even}.\\
   		\end{cases}
   	\end{aligned}
   \end{equation*}
   
   If $s$ is odd, we have $|E(G)|\leq\binom{n-2}{2}+2$ for $s\geq3$ (which implies $n\geq9$).
   
   If $s$ is even, we have $|E(G)|\leq\binom{n-2}{2}+2$ for $s\geq2$ (which implies $n\geq7$). 
   
   Combining the above arguments, we have $|E(G)|\leq F(n)$ for all $n\geq5$, a contradiction.
   
   By Case 1 and Case 2, we complete the proof.
   \end{proof}

 \section{\textbf{The proof of Theorem \ref{c2}}}
 
   \hspace{1.5em}In this section, we prove Theorem \ref{c2}, which presents a sufficient condition, in terms of the $A_{\alpha}$-spectral radius, to determine whether a graph contains a $\{P_{2},C_{3},P_{5},\mathcal{T}(3)\}$-factor or not.
   
   \begin{proof}
   Suppose to the contrary that $G$ does not contain a $\{P_{2},C_{3},P_{5},\mathcal{T}(3)\}$-factor. By Lemma \ref{d6}, there exists a nonempty subset $S$ of $V(G)$ satisfying $i(G-S)>\frac{3}{2}|S|$. 
   
   Choose such a connected graph $G$ of order $n$ so that its  $A_{\alpha}$-spectral radius is as large as possible. Together with Lemma \ref{d4} and the choice of $G$, the induced subgraph $G[S]$ and every connected component of $G-S$ are complete graphs, and $G=G[S]\vee(G-S)$.
   
   It is easy to see that $G-S$ admits at most one non-trivial connected component. Otherwise, we can construct a new graph $G'$ by adding edges among all non-trivial connected components to obtain a bigger non-trivial connected component. Clearly, $G$ is a proper subgraph of $G'$. According to Lemma \ref{d4}, $\rho_{\alpha}(G')>\rho_{\alpha}(G)$, which contradicts the choice of $G$. For convenient, let $|S|=s$ and $i(G-S)=i$.
  
    Now, we show Theorem \ref{c2} by considering the following two cases.
   
    \textbf{Case 1}. $G-S$ has exactly one non-trivial connected component.
    
     In this case, $G=K_{s}\vee(K_{n_{1}}\cup iK_{1})$, where $n_{1}=n-s-i\geq2$. Now we show $i=\lfloor\frac{3s}{2}\rfloor+1$.
    
    If $i\geq\lfloor\frac{3s}{2}\rfloor+2$, then we construct a new graph $G''$ obtained from $G$ by joining each vertex of $K_{n_{1}}$ with one vertex in $iK_{1}$ by an edge. It is obvious that $i(G''-S)=i-1\geq\lfloor\frac{3s}{2}\rfloor+1$ and $G$ is a proper subgraph of $G''$. According to Lemma \ref{d4}, $\rho_{\alpha}(G'')>\rho_{\alpha}(G)$, which contradicts with the choice of $G$. Therefore, $i=\lfloor\frac{3s}{2}\rfloor+1$, $G=K_{s}\vee(K_{n-s-\lfloor\frac{3s}{2}\rfloor-1} \cup (\lfloor\frac{3s}{2}\rfloor+1)K_{1})$ by $i>\frac{3s}{2}$, and the quotient matrix of $A_{\alpha}(G)$ in terms of the partition $\{V((\lfloor\frac{3s}{2}\rfloor+1)K_{1}),V(K_{n-s-\lfloor\frac{3s}{2}\rfloor-1}),V(K_{s})\}$ can be written as
    \begin{equation*}
    	B_{1}=	\begin{pmatrix}
    		\alpha s & 0 & (1-\alpha)s\\
    		0 & n+(\alpha s-s-\lfloor\frac{3s}{2}\rfloor)-2 & (1-\alpha)s\\
    		(1-\alpha)(\lfloor\frac{3s}{2}\rfloor+1) & (1-\alpha)(n-s-\lfloor\frac{3s}{2}\rfloor-1) & \alpha n-\alpha s+s-1\\
    	\end{pmatrix},
    \end{equation*}
    then the characteristic polynomial of $B_{1}$ is 
    \begin{equation}\label{e1}
    	\begin{aligned}
    		f_{B_{1}}(x)&=x^3-((\alpha+1)n+\alpha s-\lfloor\frac{3s}{2}\rfloor-3)x^2\\
    		&-((\alpha n+s-1)\lfloor\frac{3s}{2}\rfloor-\alpha n^2-(\alpha^2+\alpha)sn+(2\alpha+1)n+(2\alpha+1)s-2)x\\
    		&-((2\alpha^2-3\alpha+1)\lfloor\frac{3s}{2}\rfloor+2\alpha^2-3\alpha+1)s^2\\
    		&-((\alpha^2-2\alpha+1)\lfloor\frac{3s}{2}\rfloor^2-((2\alpha^2-2\alpha+1)n-3\alpha^2+5\alpha-3)\lfloor\frac{3s}{2}\rfloor\\
    		&+\alpha^2 n^2-(3\alpha^2-\alpha+1)n+2\alpha^2-2\alpha+2)s.
    	\end{aligned}
    \end{equation}
    
    By Lemma \ref{d2}, $\rho_{\alpha}(G)$ is the largest root of $f_{B_{1}}(x)=0$, say, $f_{B_{1}}(\rho_{\alpha}(G))=0$. Let $\eta_{1}=\rho_{\alpha}(G)\geq\eta_{2}\geq\eta_{3}$ be the three roots of $f_{B_{1}}(x)=0$ and $Q=diag(\lfloor\frac{3s}{2}\rfloor+1,n-s-\lfloor\frac{3s}{2}\rfloor-1,s)$. It is easy to check that
    \begin{align}
    	&Q^\frac{1}{2}B_{1}Q^{-\frac{1}{2}}\nonumber\\=&   	
    	\begin{pmatrix}
    		\alpha s & 0 & (1-\alpha)s^\frac{1}{2}(\lfloor\frac{3s}{2}\rfloor+1)^\frac{1}{2}\\
    		0 & n+(\alpha s-s-\lfloor\frac{3s}{2}\rfloor)-2 & (1-\alpha)s^\frac{1}{2}(n-s-\lfloor\frac{3s}{2}\rfloor-1)^\frac{1}{2}\\
    		(1-\alpha)s^\frac{1}{2}(\lfloor\frac{3s}{2}\rfloor+1)^\frac{1}{2} & (1-\alpha)s^\frac{1}{2}(n-s-\lfloor\frac{3s}{2}\rfloor-1)^\frac{1}{2} & \alpha n-\alpha s+s-1\nonumber\\
    	\end{pmatrix}
    \end{align}
    is symmetric, and contains
    \begin{equation*}
    	\begin{pmatrix}
    		\alpha s & 0\\
    		0 & n+(\alpha s-s-\lfloor\frac{3s}{2}\rfloor)-2\\
    	\end{pmatrix}
    \end{equation*}
    as a submatrix. Since $Q^\frac{1}{2}B_{1}Q^{-\frac{1}{2}}$ and $B_{1}$ admit the same eigenvalues, according to Lemma \ref{d5}, we get
    \begin{equation}\label{e2}
    	\begin{aligned}
    		\alpha s\leq\eta_{2}\leq n+(\alpha s-s-\lfloor\frac{3s}{2}\rfloor)-2<
    		\begin{cases}
    			n-3,&\text{if $s$ is odd},\\
    			n-5,&\text{if $s$ is even}.\\
    		\end{cases}
    	\end{aligned}
    \end{equation}
    
    \textbf{Subcase 1.1}. $s$ is odd.
    
    Let $\varphi(x)=x^3-((\alpha+1)n+\alpha-4)x^2+(\alpha n^2+(\alpha^2-2\alpha-1)n-2\alpha+1)x-\alpha^2n^2+(5\alpha^2-3\alpha+2)n-10\alpha^2+15\alpha-8$, $\tau(n)$ be the largest root of $\varphi(x)=0$, $G_{2}=K_{1}\vee(K_{n-3}\cup 2K_{1})$. Clearly, $f_{B_{1}}(x)=\varphi(x)$ by plugging $s=1$ into (\ref{e1}), then $G\cong G_{2}$, and $\tau(n)=\rho_{\alpha}(G_{2})$ by Lemma \ref{d2}.
    
    Since $K_{n-2}$ is a proper subgraph of $G_{2}$, by Lemmas \ref{d3}-\ref{d4} and (\ref{e2}), we have 
    \begin{equation}\label{e44}
    	\begin{aligned}
    		\tau(n)=\rho_{\alpha}(G_{2})>\rho_{\alpha}(K_{n-2})=n-3>\eta_{2}.
    	\end{aligned}
    \end{equation}
    
    \textbf{Subcase 1.1.1}. $s=1$.
    
    In this subcase, we have $\rho_{\alpha}(G)=\rho_{\alpha}(G_{2})=\tau(n)$, which contradicts with $\rho_{\alpha}(G)>\tau(n)$. 
    
    \textbf{Subcase 1.1.2}. $s\geq3$.
    
    In this subcase, $G\ncong G_{2}$ and $f_{B_{1}}(x)\neq\varphi(x)$. By (\ref{e1}) and $\varphi(\tau(n))=0$, we have
    \begin{equation}\label{e55}
    	\begin{aligned}
    		f_{B_{1}}(\tau(n))&=f_{B_{1}}(\tau(n))-\varphi(\tau(n))=-\frac{1}{4}(s-1)H(\tau(n)),
    	\end{aligned}
    \end{equation}
    where $H(x)=(4\alpha-6)x^2+(-4\alpha^2n+2\alpha n+6s+8\alpha+2)x+4\alpha^2n^2-(12\alpha^2-12\alpha+6)sn-(20\alpha^2-12\alpha+8)n+(21\alpha^2-36\alpha+15)s^2+(37\alpha^2-60\alpha+29)s+40\alpha^2-60\alpha+32$, its axis symmetry is $x=-\frac{-4\alpha^2n+2\alpha n+6s+8\alpha+2}{2(4\alpha-6)}$. Now we show $\rho_{\alpha}(G)<\tau(n)$, and we obtain a contradiction with $\rho_{\alpha}(G)>\tau(n)$.
    
    \textbf{Subcase 1.1.2.1}. $0\leq\alpha\leq\frac{5}{7}$.
    
    Firstly, we show $H(\tau(n))<H(n-3)$.
    
    By (\ref{e44}), we only need show $-\frac{-4\alpha^2n+2\alpha n+6s+8\alpha+2}{2(4\alpha-6)}<n-3$, say, show $g_{1}(s)=2(n-3)(4\alpha-6)+(-4\alpha^2n+2\alpha n+6s+8\alpha+2)<0$. 
    
    In fact, by $n=n_{1}+\frac{5}{2}s+\frac{1}{2}\geq\frac{5}{2}s+\frac{5}{2}$ and $s\geq3$, we have 
    \begin{equation*}
    	\begin{aligned}
    		g_{1}(s)&=(-4\alpha^2+10\alpha-12)n-16\alpha+6s+38\\
    		&\leq(-4(\alpha-\frac{5}{4})^2-\frac{23}{4})(\frac{5}{2}s+\frac{5}{2})-16\alpha+6s+38\\
    		&=s(-10\alpha^2+25\alpha-24)-10\alpha^2+9\alpha+8\\
    		&\leq3(-10\alpha^2+25\alpha-24)-10\alpha^2+9\alpha+8\\
    		&<0.\\
    	\end{aligned}
    \end{equation*}
    
    By the above arguments and (\ref{e44}), we have $H(\tau(n))<H(n-3)$.
    
    On the other hand, by a directed calculation, we have $H(n-3)=(6\alpha-6)n^2+(-12s\alpha^2+12s\alpha-8\alpha^2-10\alpha+30)n+(21\alpha^2-36\alpha+15)s^2+(37\alpha^2-60\alpha+11)s+40\alpha^2-48\alpha-28$. Let $P(n)=H(n-3)$. Then the axis symmetry of $P(x)$ is $x=-\frac{-12s\alpha^2+12s\alpha-8\alpha^2-10\alpha+30}{2(6\alpha-6)}$.
    
    Now we show  $P(n)<0$. Let $g_{2}(s)=2(\frac{5}{2}s+\frac{5}{2})(6\alpha-6)+(-12s\alpha^2+12s\alpha-8\alpha^2-10\alpha+30)$. Then by $\alpha\leq\frac{5}{7}$, we have
    \begin{equation*}
    	\begin{aligned}
    		g_{2}(s)&=(-12\alpha^2+42\alpha-30)s-8\alpha^2+20\alpha\\
    		&=(-12(\alpha-\frac{7}{4})^2+\frac{27}{4})s-8(\alpha-\frac{5}{4})^2+\frac{25}{2}\\
    		&<-\frac{300}{49}s+\frac{25}{2}\\
    		&<0,\\
    	\end{aligned}
    \end{equation*}
    which implies $-\frac{-12s\alpha^2+12s\alpha-8\alpha^2-10\alpha+30}{2(6\alpha-6)}<\frac{5}{2}s+\frac{5}{2}$, and $P(n)$ is monotonically decreasing when $n\geq\frac{5}{2}s+\frac{5}{2}$. Then by $-\frac{\frac{63}{2}s^2+20s-\frac{71}{2}}{2(-9s^2-13s+20)}>\frac{5}{7}\geq\alpha$, we have
    \begin{equation}\label{e55}
    	\begin{aligned}
    		P(n)&\leq P(\frac{5}{2}s+\frac{5}{2})\\
    		&=(6\alpha-6)(\frac{5}{2}s+\frac{5}{2})^2+(-12s\alpha^2+12s\alpha-8\alpha^2-10\alpha+30)(\frac{5}{2}s+\frac{5}{2})\\
    		&+(21\alpha^2-36\alpha+15)s^2+(37\alpha^2-60\alpha+11)s+40\alpha^2-48\alpha-28\\
    		&=(-9s^2-13s+20)\alpha^2+(\frac{63}{2}s^2+20s-\frac{71}{2})\alpha-\frac{45}{2}s^2+11s+\frac{19}{2}\\
    		&\leq\frac{25}{49}(-9s^2-13s+20)+\frac{5}{7}(\frac{63}{2}s^2+20s-\frac{71}{2})-\frac{45}{2}s^2+11s+\frac{19}{2}\\
    		&=-\frac{1}{49}(225s^2-914s+277).
    	\end{aligned}
    \end{equation}
    
    If $s\geq5$, then $P(n)<0$ by (\ref{e55}).
    
    If $s=3$, then $-\frac{-12s\alpha^2+12s\alpha-8\alpha^2-10\alpha+30}{2(6\alpha-6)}=-\frac{-44\alpha^2+26\alpha+30}{2(6\alpha-6)}<8<20\leq f(\alpha)\leq n$ due to $0\leq\alpha\leq\frac{5}{7}$. Hence, 
    \begin{equation*}
    	\begin{aligned}
    		p(n)\leq
    		\begin{cases}
    			p(20)=-540\alpha^2+2368\alpha-1660<0,&\text{if $0\leq\alpha\leq\frac{1}{2}$ and $n\geq f(\alpha)=20$},\\
    			p(25)=-760\alpha^2+3848\alpha-2860<0,&\text{if $\frac{1}{2}<\alpha\leq\frac{5}{7}$ and $n\geq f(\alpha)=25$}.\\
    		\end{cases}
    	\end{aligned}
    \end{equation*}

    Thus, we conclude that $P(n)<0$ for $s\geq3$. Combining the above arguments, by (\ref{e55}), we have $f_{B_{1}}(\tau(n))=-\frac{1}{4}(s-1)H(\tau(n))>-\frac{1}{4}(s-1)H(n-3)=-\frac{1}{4}(s-1)P(n)>0$, which implies $\rho_{\alpha}(G)<\tau(n)$ for $s\geq3$ by (\ref{e44}), a contradiction with $\rho_{\alpha}(G)>\tau(n)$.

    \textbf{Subcase 1.1.2.2}. $\frac{5}{7}<\alpha<1$.
    
    By (\ref{e1}), we have
    \begin{equation*}
    	\begin{aligned}
    		f_{B_{1}}(n-3)&=\frac{3}{4}(7\alpha-5)(1-\alpha)s^3+((3\alpha^2-3\alpha)n-4\alpha^2+6\alpha+1)s^2\\
    		&+((\frac{3}{2}-\frac{3}{2}\alpha)n^2-(\alpha^2-\frac{11}{2}\alpha+\frac{15}{2})n-\frac{3}{4}\alpha^2-3\alpha+\frac{39}{4})s\\
    		&+(\frac{3}{2}\alpha-\frac{3}{2})n^2-(\frac{9}{2}\alpha-\frac{15}{2})n-9.\\
    	\end{aligned}
    \end{equation*}
    
    Let $\Psi(s,n)=f_{B_{1}}(n-3)$.
    Thus, we get
    \begin{equation*}
    	\begin{aligned}
    		\frac{\partial \Psi(s,n)}{\partial s}
    		&=\frac{9}{4}(7\alpha-5)(1-\alpha)s^2+2((3\alpha^2-3\alpha)n-4\alpha^2+6\alpha+1)s\\
    		&+((\frac{3}{2}-\frac{3}{2}\alpha)n^2-(\alpha^2-\frac{11}{2}\alpha+\frac{15}{2})n-\frac{3}{4}\alpha^2-3\alpha+\frac{39}{4}).
    	\end{aligned}
    \end{equation*}
    
    Since $\frac{5}{7}<\alpha<1$, $n\geq f(\alpha)>\frac{7}{1-\alpha}+3>\frac{7}{1-\alpha}$. By a simple computation, we have 
    \begin{equation*}
    	\begin{aligned}
    		\left.\frac{\partial \Psi(s,n)}{\partial s}\right|_{s=3}
    		&=(\frac{3}{2}-\frac{3}{2}\alpha)n^2+(17\alpha^2-\frac{25}{2}\alpha-\frac{15}{2})n-\frac{333}{2}\alpha^2+276\alpha-\frac{171}{2}\\
    		&>(\frac{3}{2}-\frac{3}{2}\alpha)(\frac{7}{1-\alpha})^2+(17\alpha^2-\frac{25}{2}\alpha-\frac{15}{2})(\frac{7}{1-\alpha})-\frac{333}{2}\alpha^2+276\alpha-\frac{171}{2}\\
    		&=\frac{1}{2(1-\alpha)}(333\alpha^3-647\alpha^2+548\alpha-129)\\
    		&>0,
    	\end{aligned}
    \end{equation*}
    and
    \begin{equation*}
    	\begin{aligned}
    		\left.\frac{\partial \Psi(s,n)}{\partial s}\right|_{s=\frac{2}{5}n-1}
    		&=\frac{1}{50}((-6\alpha^2+21\alpha-15)n^2+(120\alpha^2-265\alpha+115)n-425\alpha^2+600\alpha-175)\\
    		&<\frac{1}{50}((-6\alpha^2+21\alpha-15)(\frac{7}{1-\alpha})^2+(120\alpha^2-265\alpha+115)(\frac{7}{1-\alpha})\\
    		&-425\alpha^2+600\alpha-175)\\
    		&=\frac{1}{50(1-\alpha)}(425\alpha^3-185\alpha^2-786\alpha-105)\\
    		&<0.
    	\end{aligned}
    \end{equation*}
    
    Then $f_{B_{1}}(n-3)=\Psi(s,n)\geq$ min$\{\Psi(3,n),\Psi(\frac{2}{5}n-1,n)\}$ since the leading coefficient of $\Psi(s,n)$ (when viewed as a cubic polynomial of $s$) is positive, and $3\leq s\leq\frac{2}{5}n-1$. 
    
    By $\frac{5}{7}<\alpha<1$ and $n\geq f(\alpha)>\frac{7}{1-\alpha}+3>\frac{7}{1-\alpha}$, we have
    \begin{equation*}
    	\begin{aligned}
    		\Psi(3,n)&=(3-3\alpha)n^2+(24\alpha^2-15\alpha-15)n-180\alpha^2+288\alpha-72\\
    		&>(3-3\alpha)(\frac{7}{1-\alpha})^2+(24\alpha^2-15\alpha-15)(\frac{7}{1-\alpha})-180\alpha^2+288\alpha-72\\
    		&=\frac{1}{1-\alpha}(180\alpha^3-300\alpha^2+255\alpha-30)\\
    		&>0,
    	\end{aligned}
    \end{equation*}
    and
    \begin{equation*}
    	\begin{aligned}
    		\Psi(\frac{2}{5}n-1,n)&=\frac{1}{125}((18\alpha^2-63\alpha+45)n^3-(115\alpha^2-530\alpha+505)n^2\\
    		&+(75\alpha^2-1025\alpha+1700)n+250\alpha^2-1750)\\
    		&>\frac{1}{125}((18\alpha^2-63\alpha+45)(\frac{7}{1-\alpha})^3-(115\alpha^2-530\alpha+505)(\frac{7}{1-\alpha})^2\\
    		&+(75\alpha^2-1025\alpha+1700)(\frac{7}{1-\alpha})+250\alpha^2-1750)\\
    		&=\frac{1}{125(1-\alpha)^2}(250\alpha^4-1025\alpha^3+565\alpha^2+4221\alpha+840)\\
    		&>0.
    	\end{aligned}
    \end{equation*}
    Therefore, we conclude that $f_{B_{1}}(n-3)\geq$ min$\{\Psi(3,n),\Psi(\frac{2}{5}n-1,n)\}>0$ for $s\geq3$. By (\ref{e44}), we have $\rho_{\alpha}(G)<\tau(n)$ for $s\geq3$, which contradicts with $\rho_{\alpha}(G)>\tau(n)$.

    \textbf{Subcase 1.2}. $s$ is even.
    
    Let $\psi(x)=x^3-((\alpha+1)n+2\alpha-6)x^2+(\alpha n^2+(2\alpha^2-3\alpha-1)n-4\alpha-3)x-2\alpha^2n^2+(18\alpha^2-14\alpha+8)n-72\alpha^2+118\alpha-56$, $\theta(n)$ be the largest root of $\psi(x)=0$, $G_{3}=K_{2}\vee(K_{n-6}\cup 4K_{1})$. Clearly, $f_{B_{1}}(x)=\psi(x)$ by plugging $s=2$ into (\ref{e1}), then $G\cong G_{3}$, and $\theta(n)=\rho_{\alpha}(G_{3})$ by Lemma \ref{d2}.
    
    Since $K_{n-4}$ is a proper subgraph of $G_{3}$, by Lemmas \ref{d3}-\ref{d4} and (\ref{e2}),  we have 
    \begin{equation}\label{e4}
    	\begin{aligned}
    		\theta(n)=\rho_{\alpha}(G_{3})>\rho_{\alpha}(K_{n-4})=n-5>\eta_{2}.
    	\end{aligned}
    \end{equation}
    
    \textbf{Subcase 1.2.1}. $s=2$.
    
    In this subcase, we have $\rho_{\alpha}(G)=\rho_{\alpha}(G_{3})=\theta(n)$. Now we prove $\theta(n)<n-3<\tau(n)$ to get the contradiction. 
    
    By a directed computation, we have
    \begin{equation*}
    	\begin{aligned}
    		\psi(n-3)&=(2-2\alpha)n^2+(12\alpha^2-6\alpha-10)n-72\alpha^2+112\alpha-20.
    	\end{aligned}
    \end{equation*}
    
    Let $\Omega(x)=(2-2\alpha)x^2+(12\alpha^2-6\alpha-10)x-72\alpha^2+112\alpha-20$. If $\alpha\in [0,\frac{5}{7}]$, then $x=-\frac{12\alpha^2-6\alpha-10}{2(2-2\alpha)}<8$ for  and $\Omega(x)$ is increasing when $x\geq20$, so $\psi(n-3)=\Omega(n)\geq\Omega(20)=168\alpha^2-808\alpha+580>0$ when $n\geq20$. If $\alpha\in (\frac{5}{7},1)$,  then $-\frac{12\alpha^2-6\alpha-10}{2(2-2\alpha)}<\frac{7}{1-\alpha}$, and $\Omega(n)$ is increasing when $x\geq\frac{7}{1-\alpha}+3>\frac{7}{1-\alpha}$, so $\psi(n-3)=\Omega(n)>\Omega(\frac{7}{1-\alpha})=\frac{72\alpha^3-100\alpha^2+90\alpha+8}{1-\alpha}>0$ when $n\geq\frac{7}{1-\alpha}+3$.
    
    Therefore, we get $\theta(n)<n-3$ by $\psi(n-3)>0$ and (\ref{e4}), then  $\rho_{\alpha}(G)=\rho_{\alpha}(G_{3})=\theta(n)<\tau(n)$ by (\ref{e44}), which contradicts with $\rho_{\alpha}(G)>\tau(n)$.
   
   \textbf{Subcase 1.2.2}. $s\geq4$.
   
   In this subcase, $G\ncong G_{3}$ and $f_{B_{1}}(x)\neq\psi(x)$. By (\ref{e1}) and $\psi(\theta(n))=0$, we have
   \begin{equation}\label{e5}
   	\begin{aligned}
   		f_{B_{1}}(\theta(n))&=f_{B_{1}}(\theta(n))-\psi(\theta(n))=-\frac{1}{4}(s-2)h(\theta(n)),
   	\end{aligned}
   \end{equation}
   where $h(x)=(4\alpha-6)x^2+(-4\alpha^2n+2\alpha n+6s+8\alpha+10)x+4\alpha^2n^2-(12\alpha^2-12\alpha+6)sn-(36\alpha^2-28\alpha+16)n+(21\alpha^2-36\alpha+15)s^2+(68\alpha^2-114\alpha+52)s+144\alpha^2-236\alpha+112$, its axis symmetry is $x=-\frac{-4\alpha^2n+2\alpha n+6s+8\alpha+10}{2(4\alpha-6)}$.
   
   \textbf{Subcase 1.2.2.1}. $0\leq\alpha\leq\frac{5}{7}$.
   
   Firstly, we show $h(\theta(n))<h(n-5)$.
   
   By (\ref{e4}), we only need show $-\frac{-4\alpha^2n+2\alpha n+6s+8\alpha+10}{2(4\alpha-6)}<n-5$, say, show $g_{3}(s)=2(n-5)(4\alpha-6)+(-4\alpha^2n+2\alpha n+6s+8\alpha+10)<0$.
   
   In fact, by $n=n_{1}+\frac{5}{2}s+1\geq\frac{5}{2}s+3$ and $s\geq4$, we have 
   \begin{equation*}
   	\begin{aligned}
   		g_{3}(s)&=(-4\alpha^2+10\alpha-12)n-32\alpha+6s+70\\
   		&\leq(-4(\alpha-\frac{5}{4})^2-\frac{23}{4})(\frac{5}{2}+3)-32\alpha+6s+70\\
   		&=s(-10\alpha^2+25\alpha-30)-12\alpha^2-2\alpha+34\\
   		&\leq4(-10\alpha^2+25\alpha-30)-12\alpha^2-2\alpha+34\\
   		&<0.\\
   	\end{aligned}
   \end{equation*}
   
   By the above arguments and (\ref{e4}), we have $h(\theta(n))<h(n-5)$.
   
   On the other hand, by a directed calculation, we have $h(n-5)=(6\alpha-6)n^2+(-12s\alpha^2+12s\alpha-16\alpha^2-14\alpha+54)n+(21\alpha^2-36\alpha+15)s^2+(68\alpha^2-114\alpha+22)s+144\alpha^2-176\alpha-88$. Let $p(n)=h(n-5)$. Then the axis symmetry of $p(x)$ is $x=-\frac{-12s\alpha^2+12s\alpha-16\alpha^2-14\alpha+54}{2(6\alpha-6)}$.
   
   Now we show  $p(n)<0$.
   
   If $s\geq10$, we take $g_{4}(s)=2(\frac{5}{2}s+3)(6\alpha-6)+(-12s\alpha^2+12s\alpha-16\alpha^2-14\alpha+54)$. By $\alpha\leq\frac{5}{7}$, we have
   \begin{equation*}
   	\begin{aligned}
   		g_{4}(s)&=(-12\alpha^2+42\alpha-30)s-16\alpha^2+22\alpha+18\\
   		&=(-12(\alpha-\frac{7}{4})^2+\frac{27}{4})s-16(\alpha-\frac{11}{16})^2+\frac{409}{16}\\
   		&<-\frac{300}{49}s+\frac{409}{16}\\
   		&<0,\\
   	\end{aligned}
   \end{equation*}
   which implies $-\frac{-12s\alpha^2+12s\alpha-16\alpha^2-14\alpha+54}{2(6\alpha-6)}<\frac{5}{2}s+3$, and $p(n)$ is monotonically decreasing when $n\geq\frac{5}{2}s+3$. Then by $-\frac{\frac{63}{2}s^2-23s-164}{2(-9s^2-8s+96)}>\frac{5}{7}\geq\alpha$, we have
   \begin{equation*}{\label{e7}}
   	\begin{aligned}
   		p(n)&\leq p(\frac{5}{2}s+3)\\
   		&=(6\alpha-6)(\frac{5}{2}s+3)^2+(-12s\alpha^2+12s\alpha-16\alpha^2-14\alpha+54)(\frac{5}{2}s+3)\\
   		&+(21\alpha^2-36\alpha+15)s^2+(68\alpha^2-114\alpha+22)s+144\alpha^2-176\alpha-88\\
   		&=(-9s^2-8s+96)\alpha^2+(\frac{63}{2}s^2-23s-164)\alpha-\frac{45}{2}s^2+67s+20\\
   		&\leq\frac{25}{49}(-9s^2-8s+96)+\frac{5}{7}(\frac{63}{2}s^2-23s-164)-\frac{45}{2}s^2+67s+20\\
   		&=-\frac{1}{49}(225s^2-2278s+2360)\\
   		&<0.
   	\end{aligned}
   \end{equation*}

   If $s\in\{4,6,8\}$, then
   \begin{equation*}
   	\begin{aligned}
   		-\frac{-12s\alpha^2+12s\alpha-16\alpha^2-14\alpha+54}{2(6\alpha-6)}=
   		\begin{cases}
   			-\frac{-112\alpha^2+82\alpha+54}{2(6\alpha-6)}<17,&\text{if $s=8$},\\
   			-\frac{-88\alpha^2+58\alpha+54}{2(6\alpha-6)}<15,&\text{if $s=6$},\\
   			-\frac{-64\alpha^2+34\alpha+54}{2(6\alpha-6)}<14,&\text{if $s=4$}.\\
   		\end{cases}
   	\end{aligned}
   \end{equation*}
   
   For $0\leq\alpha\leq\frac{1}{2}$ and $n\geq f(\alpha)=20$, we have
   \begin{equation*}
   	\begin{aligned}
   		p(n)\leq p(20)=
   		\begin{cases}
   			-208\alpha^2+648\alpha-272\leq0,&\text{if $s=8$},\\
   			-452\alpha^2+1404\alpha-736<0,&\text{if $s=6$},\\
   			-528\alpha^2+1872\alpha-1080<0,&\text{if $s=4$}.\\
   		\end{cases}
   	\end{aligned}
   \end{equation*}
   
   For $\frac{1}{2}<\alpha\leq\frac{5}{7}$ and $n\geq f(\alpha)=25$, we have
   \begin{equation*}
   	\begin{aligned}
   		p(n)\leq p(25)=
   		\begin{cases}
   			-768\alpha^2+2408\alpha-1352<0,&\text{if $s=8$},\\
   			-892\alpha^2+3044\alpha-1816<0,&\text{if $s=6$},\\
   			-848\alpha^2+3392\alpha-2160<0,&\text{if $s=4$}.\\
   		\end{cases}
   	\end{aligned}
   \end{equation*}
   
   Thus, we conclude that $p(n)<0$ for $s\geq4$. Combining the above arguments, by (\ref{e5}), we have $f_{B_{1}}(\theta(n))=-\frac{1}{4}(s-2)h(\theta(n))>-\frac{1}{4}(s-2)h(n-3)=-\frac{1}{4}(s-2)p(n)\geq0$, which implies $\rho_{\alpha}(G)<\theta(n)<\tau(n)$ for $s\geq4$ by (\ref{e4}), a contradiction with $\rho_{\alpha}(G)>\tau(n)$.

   \textbf{Subcase 1.2.2.2}. $\frac{5}{7}<\alpha<1$.
   
  By (\ref{e1}), we have
  \begin{equation*}
  	\begin{aligned}
  		f_{B_{1}}(n-5)&=\frac{3}{4}(7\alpha-5)(1-\alpha)s^3+((3\alpha^2-3\alpha)n-\frac{13}{2}\alpha^2+\frac{21}{2}\alpha+2)s^2\\
  		&+((\frac{3}{2}-\frac{3}{2}\alpha)n^2-(2\alpha^2-\frac{19}{2}\alpha+\frac{27}{2})n-2\alpha^2-13\alpha+33)s\\
  		&+(3\alpha-3)n^2-(15\alpha-27)n-60.\\
  	\end{aligned}
  \end{equation*}
  
  Let $\Phi(s,n)=f_{B_{1}}(n-5)$. Thus, we get
  \begin{equation*}
  	\begin{aligned}
  		\frac{\partial \Phi(s,n)}{\partial s}
  		&=\frac{9}{4}(7\alpha-5)(1-\alpha)s^2+2((3\alpha^2-3\alpha)n-\frac{13}{2}\alpha^2+\frac{21}{2}\alpha+2)s\\
  		&+((\frac{3}{2}-\frac{3}{2}\alpha)n^2-(2\alpha^2-\frac{19}{2}\alpha+\frac{27}{2})n-2\alpha^2-13\alpha+33).
  	\end{aligned}
  \end{equation*}
  
  Since $\frac{5}{7}<\alpha<1$, $n\geq f(\alpha)>\frac{7}{1-\alpha}+3>\frac{7}{1-\alpha}$. By a simple computation, we have 
  \begin{equation*}
  	\begin{aligned}
  		\left.\frac{\partial \Phi(s,n)}{\partial s}\right|_{s=4}
  		&=(\frac{3}{2}-\frac{3}{2}\alpha)n^2+(22\alpha^2-\frac{29}{2}\alpha-\frac{27}{2})n-306\alpha^2+503\alpha-131\\
  		&>(\frac{3}{2}-\frac{3}{2}\alpha)(\frac{7}{1-\alpha})^2+(22\alpha^2-\frac{29}{2}\alpha-\frac{27}{2})(\frac{7}{1-\alpha})-306\alpha^2+503\alpha-131\\
  		&=\frac{1}{2(1-\alpha)}(612\alpha^3-1310\alpha^2+1065\alpha-304)\\
  		&>0,
  	\end{aligned}
  \end{equation*}
  and
  \begin{equation*}
  	\begin{aligned}
  		\left.\frac{\partial \Phi(s,n)}{\partial s}\right|_{s=\frac{2}{5}n-\frac{6}{5}}
  		&=\frac{1}{50}((-6\alpha^2+21\alpha-15)n^2+(36\alpha^2-41\alpha-55)n-454\alpha^2+34\alpha+600)\\
  		&<\frac{1}{50}((-6\alpha^2+21\alpha-15)(\frac{7}{1-\alpha})^2+(36\alpha^2-41\alpha-55)(\frac{7}{1-\alpha})\\
  		&-454\alpha^2+34\alpha+600)\\
  		&=\frac{1}{50(1-\alpha)}(454\alpha^3-236\alpha^2-559\alpha-520)\\
  		&<0.
  	\end{aligned}
  \end{equation*}
  
  Then $f_{B_{1}}(n-5)=\Phi(s,n)\geq$ min$\{\Phi(4,n),\Phi(\frac{2}{5}n-\frac{6}{5},n)\}$ since the leading coefficient of $\Phi(s,n)$ (when viewed as a cubic polynomial of $s$) is positive, and $4\leq s\leq\frac{2}{5}n-\frac{6}{5}$. 
  
  By $\frac{5}{7}<\alpha<1$ and $n\geq f(\alpha)>\frac{7}{1-\alpha}+3>\frac{7}{1-\alpha}$, we have
  \begin{equation*}
  	\begin{aligned}
  		\Phi(4,n)&=(3-3\alpha)n^2+(40\alpha^2-25\alpha-27)n-448\alpha^2+692\alpha-136\\
  		&>(3-3\alpha)(\frac{7}{1-\alpha})^2+(40\alpha^2-25\alpha-27)(\frac{7}{1-\alpha})-448\alpha^2+692\alpha-136\\
  		&=\frac{1}{1-\alpha}(448\alpha^3-860\alpha^2+653\alpha-178)\\
  		&>0,
  	\end{aligned}
  \end{equation*}
  and
  \begin{equation*}
  	\begin{aligned}
  		\Phi(\frac{2}{5}n-\frac{6}{5},n)&=\frac{1}{125}((18\alpha^2-63\alpha+45)n^3-(212\alpha^2-997\alpha+965)n^2\\
  		&+(386\alpha^2-3806\alpha+6000)n+264\alpha^2+1896\alpha-11280)\\
  		&>\frac{1}{125}((18\alpha^2-63\alpha+45)(\frac{7}{1-\alpha}+3)^3-(212\alpha^2-997\alpha+965)(\frac{7}{1-\alpha}+3)^2\\
  		&+(386\alpha^2-3806\alpha+6000)(\frac{7}{1-\alpha}+3)+264\alpha^2+1896\alpha-11280)\\
  		&=\frac{1}{125(1-\alpha)^2}(550\alpha^3-4825\alpha^2+7496\alpha-2780)\\
  		&>0.
  	\end{aligned}
  \end{equation*}
  Therefore, we conclude that $f_{B_{1}}(n-5)\geq$ min$\{\Phi(4,n),\Phi(\frac{2}{5}n-\frac{6}{5},n)\}>0$ for $s\geq4$. By (\ref{e4}), we have $\rho_{\alpha}(G)<\theta(n)<\tau(n)$ for $s\geq4$, which contradicts $\rho_{\alpha}(G)>\tau(n)$.

  	\textbf{Case 2}. $G-S$ has no non-trivial connected component.
  	
  	In this case, $G=K_{s}\vee iK_{1}$. Now we show $i=\lfloor\frac{3s}{2}\rfloor+1$ or $i=\lfloor\frac{3s}{2}\rfloor+2$.

  	If $i\geq\lfloor\frac{3s}{2}\rfloor+3$, then we create a new graph $G'''$ by adding an edge between two vertices in $iK_{1}$. Thus, $i(G'''-S)=i-2\geq\lfloor\frac{3s}{2}\rfloor+1$ and $G'''-S$ admits exactly one non-trivial connected component. Obviously, $G$ is a proper subgraph of $G'''$, and then $\rho_{\alpha}(G)\leq\rho_{\alpha}(G''')\leq\tau(n)$ by applying Case 1, a contradiction. Therefore, $i=\lfloor\frac{3s}{2}\rfloor+1$ or $i=\lfloor\frac{3s}{2}\rfloor+2$ by $i>\frac{3s}{2}$.
  	
  	\textbf{Subcase 2.1}. $i=\lfloor\frac{3s}{2}\rfloor+1$.
  	
  	Obviously, $n=s+\lfloor\frac{3s}{2}\rfloor+1$, and the quotient matrix of $A_{\alpha}(G)$ in view of the partition $\{V((\lfloor\frac{3s}{2}\rfloor+1)K_{1}),V(K_{s})\}$ equals
  	\begin{equation*}
  		B_{2}=	\begin{pmatrix}
  			\alpha s & (1-\alpha)s\\
  			(1-\alpha)(\lfloor\frac{3s}{2}\rfloor+1) & \alpha n-\alpha s+s-1\\
  		\end{pmatrix}.
  	\end{equation*}
  	
  	Then the characteristic polynomial of $B_{2}$ is
  	\begin{equation*}
  		\begin{aligned}
  			f_{B_{2}}(x)&=x^2-(\alpha n+s-1)x+\alpha^2sn-(\alpha^2-2\alpha+1)s\lfloor\frac{3s}{2}\rfloor-(\alpha^2-\alpha)s^2-(\alpha^2-\alpha)s-s\\
  			&=x^2-(\alpha n+s-1)x+(2\alpha-1)s\lfloor\frac{3s}{2}\rfloor+\alpha s^2+\alpha s-s,
  		\end{aligned}
  	\end{equation*}
  	and $\rho_{\alpha}(G)$ is the largest root of  $f_{B_{2}}(x)=0$ by Lemma \ref{d2}. By a simple computation, we have
  	\begin{equation}\label{e999}
  		\begin{aligned}
  			\rho_{\alpha}(G)=\frac{\alpha n+s-1+\sqrt{(\alpha n+s-1)^2-4((2\alpha-1)s\lfloor\frac{3s}{2}\rfloor+\alpha s^2+\alpha s-s)}}{2}.
  		\end{aligned}
  	\end{equation}
  	
  	Now we show $\rho_{\alpha}(G)<n-3$. It follows from $n=s+\lfloor\frac{3s}{2}\rfloor+1$ that
  	\begin{equation}\label{e18}
  		\begin{aligned}
  			&\hspace{1.3em}(2(n-3)-(\alpha n+s-1))^2-(\alpha n+s-1)^2+4((2\alpha-1)s\lfloor\frac{3s}{2}\rfloor+\alpha s^2+\alpha s-s)\\
  			&=(4-4\alpha)n^2-(4s+20-12\alpha)n+(8\alpha-4)s\lfloor\frac{3s}{2}\rfloor+4\alpha s^2+4\alpha s+8s+24\\&=
  			\begin{cases}
  				9(1-\alpha)s^2-4(-5\alpha+8)s+5\alpha+15,&\text{if $s$ is odd},\\
  				9(1-\alpha)s^2-2(-7\alpha+13)s+8\alpha+8,&\text{if $s$ is even}.\\
  			\end{cases}
  		\end{aligned}
  	\end{equation}
  		
  	\textbf{Subcase 2.1.1}. $s$ is odd.
  		
  	Let $t_{1}(s)=9(1-\alpha)s^2-4(-5\alpha+8)s+5\alpha+15$. Then we obtain $t_{1}(9)=456-544\alpha$, $t_{1}(11)=752-864\alpha$ and $t_{1}(\frac{19-5\alpha}{5(1-\alpha)})=\frac{1}{25(1-\alpha)}(-400\alpha^2+740\alpha+584)$.
  	
  	Since 
  	\begin{equation*}
  		\begin{aligned}
  			n=\frac{5}{2}s+\frac{1}{2}\geq
  			\begin{cases}
  				20,&\text{if $\alpha\in[0,\frac{1}{2}]$},\\
  				25,&\text{if $\alpha\in(\frac{1}{2},\frac{5}{7}]$},\\
  				\frac{7}{1-\alpha}+3,&\text{if $\alpha\in(\frac{5}{7},1)$},\\
  			\end{cases}
  		\end{aligned}
  	\end{equation*}
  	we have
  	\begin{equation*}
  		\begin{aligned}
  			\frac{4(-5\alpha+8)}{2(9-9\alpha)}<
  			\begin{cases}
  				3<9\leq s,&\text{if $\alpha\in[0,\frac{1}{2}]$},\\
  				4<11\leq s,&\text{if $\alpha\in(\frac{1}{2},\frac{5}{7}]$},\\
  				\frac{19-5\alpha}{5(1-\alpha)}\leq s,&\text{if $\alpha\in(\frac{5}{7},1)$},\\
  			\end{cases}
  		\end{aligned}
  	\end{equation*}
  	and
  	\begin{equation*}
  		\begin{aligned}
  			t_{1}(s)\geq
  			\begin{cases}
  				t_{1}(9)=456-544\alpha>0,&\text{if $\alpha\in[0,\frac{1}{2}]$},\\
  				t_{1}(11)=752-864\alpha>0,&\text{if $\alpha\in(\frac{1}{2},\frac{5}{7}]$},\\
  				t_{1}(\frac{19-5\alpha}{5(1-\alpha)})=\frac{1}{25(1-\alpha)}(-400\alpha^2+740\alpha+584)>0,&\text{if $\alpha\in(\frac{5}{7},1)$}.\\
  			\end{cases}
  		\end{aligned}
  	\end{equation*}

  	\textbf{Subcase 2.1.2}. $s$ is even.
  		
  	Let $t_{2}(s)=9(1-\alpha)s^2-2(-7\alpha+13)s+8\alpha+8$. Then we obtain $t_{2}(8)=376-456\alpha$, $t_{3}(10)=648-752\alpha$ and $t_{2}(\frac{2(9-2\alpha)}{5(1-\alpha)})=\frac{1}{25(1-\alpha)}(-336\alpha^2+484\alpha+776)$.
  	
  	Since 
  	\begin{equation*}
  		\begin{aligned}
  			n=\frac{5}{2}s+1\geq
  			\begin{cases}
  				20,&\text{if $\alpha\in[0,\frac{1}{2}]$},\\
  				25,&\text{if $\alpha\in(\frac{1}{2},\frac{5}{7}]$},\\
  				\frac{7}{1-\alpha}+3,&\text{if $\alpha\in(\frac{5}{7},1)$},\\
  			\end{cases}
  		\end{aligned}
  	\end{equation*}
  	we have
  	\begin{equation*}
  		\begin{aligned}
  			\frac{2(-7\alpha+13)}{2(9-9\alpha)}<
  			\begin{cases}
  				3<8\leq s,&\text{if $\alpha\in[0,\frac{1}{2}]$},\\
  				4<10\leq s,&\text{if $\alpha\in(\frac{1}{2},\frac{5}{7}]$},\\
  				\frac{2(9-2\alpha)}{5(1-\alpha)}\leq s,&\text{if $\alpha\in(\frac{5}{7},1)$},\\
  			\end{cases}
  		\end{aligned}
  	\end{equation*}
  	and
  	\begin{equation*}
  		\begin{aligned}
  			t_{2}(s)\geq
  			\begin{cases}
  				t_{2}(8)=376-456\alpha>0,&\text{if $\alpha\in[0,\frac{1}{2}]$},\\
  				t_{2}(10)=648-752\alpha>0,&\text{if $\alpha\in(\frac{1}{2},\frac{5}{7}]$},\\
  				t_{2}(\frac{2(9-2\alpha)}{5(1-\alpha)})=\frac{1}{25(1-\alpha)}(-336\alpha^2+484\alpha+776)>0,&\text{if $\alpha\in(\frac{5}{7},1)$}.\\
  			\end{cases}
  		\end{aligned}
  	\end{equation*}

  	Therefore, by (\ref{e999}), (\ref{e18}), $t_{1}(s)>0$ and $t_{2}(s)>0$, we have $\rho_{\alpha}(G)<n-3$.

  	\textbf{Subcase 2.2}. $i=\lfloor\frac{3s}{2}\rfloor+2$.
  	
  	Obviously, $n=s+\lfloor\frac{3s}{2}\rfloor+2$, and the quotient matrix of $A_{\alpha}(G)$ in view of the partition $\{V((\lfloor\frac{3s}{2}\rfloor+2)K_{1}),V(K_{s})\}$ equals
  	\begin{equation*}
  		B_{3}=	\begin{pmatrix}
  			\alpha s & (1-\alpha)s\\
  			(1-\alpha)(\lfloor\frac{3s}{2}\rfloor+2) & \alpha n-\alpha s+s-1\\
  		\end{pmatrix}.
  	\end{equation*}
  	
  	Then the characteristic polynomial of $B_{3}$ is
  	\begin{equation*}
  		\begin{aligned}
  			f_{B_{3}}(x)&=x^2-(\alpha n+s-1)x+\alpha^2sn-(\alpha^2-2\alpha+1)s\lfloor\frac{3s}{2}\rfloor-(\alpha^2-\alpha)s^2-(2\alpha^2-3\alpha+2)s\\
  			&=x^2-(\alpha n+s-1)x+(2\alpha-1)s\lfloor\frac{3s}{2}\rfloor+\alpha s^2+(3\alpha-2)s,
  		\end{aligned}
  	\end{equation*}
  	and $\rho_{\alpha}(G)$ is the largest root of  $f_{B_{3}}(x)=0$ by Lemma \ref{d2}. By a simple computation, we have
  	\begin{equation}\label{e1111}
  		\begin{aligned}
  			\rho_{\alpha}(G)=\frac{\alpha n+s-1+\sqrt{(\alpha n+s-1)^2-4((2\alpha-1)s\lfloor\frac{3s}{2}\rfloor+\alpha s^2+3\alpha s-2s)}}{2}.
  		\end{aligned}
  	\end{equation}
  	
  	Now we show $\rho_{\alpha}(G)<n-3$. It follows from $n=s+\lfloor\frac{3s}{2}\rfloor+2$ that
  	\begin{equation}\label{e19}
  		\begin{aligned}
  			&\hspace{1.3em}(2(n-3)-(\alpha n+s-1))^2-(\alpha n+s-1)^2+4((2\alpha-1)s\lfloor\frac{3s}{2}\rfloor+\alpha s^2+3\alpha s-2s)\\
  			&=(4-4\alpha)n^2-(4s+20-12\alpha)n+(8\alpha-4)s\lfloor\frac{3s}{2}\rfloor+4\alpha s^2+4\alpha s+8s+24\\&=
  			\begin{cases}
  				9(1-\alpha)s^2-4(-2\alpha+5)s+9\alpha+3,&\text{if $s$ is odd},\\
  				9(1-\alpha)s^2-2(-\alpha+7)s+8\alpha,&\text{if $s$ is even}.\\
  			\end{cases}
  		\end{aligned}
  	\end{equation}
  		
  	\textbf{Subcase 2.2.1}. $s$ is odd.
  		
  	Let $t_{3}(s)=9(1-\alpha)s^2-4(-2\alpha+5)s+9\alpha+3$. Then we obtain $t_{3}(9)=552-648\alpha$, $t_{3}(11)=872-992\alpha$ and $t_{3}(\frac{17-3\alpha}{5(1-\alpha)})=\frac{1}{25(1-\alpha)}(-264\alpha^2+212\alpha+976)$.
  	
  	Since 
  	\begin{equation*}
  		\begin{aligned}
  			n=\frac{5}{2}s+\frac{3}{2}\geq
  			\begin{cases}
  				20,&\text{if $\alpha\in[0,\frac{1}{2}]$},\\
  				25,&\text{if $\alpha\in(\frac{1}{2},\frac{5}{7}]$},\\
  				\frac{7}{1-\alpha}+3,&\text{if $\alpha\in(\frac{5}{7},1)$},\\
  			\end{cases}
  		\end{aligned}
  	\end{equation*}
  	we have
  	\begin{equation*}
  		\begin{aligned}
  			\frac{4(-2\alpha+5)}{2(9-9\alpha)}<
  			\begin{cases}
  				2<9\leq s,&\text{if $\alpha\in[0,\frac{1}{2}]$},\\
  				3<11\leq s,&\text{if $\alpha\in(\frac{1}{2},\frac{5}{7}]$},\\
  				\frac{17-3\alpha}{5(1-\alpha)}\leq s,&\text{if $\alpha\in(\frac{5}{7},1)$},\\
  			\end{cases}
  		\end{aligned}
  	\end{equation*}
  	and
  	\begin{equation*}
  		\begin{aligned}
  			t_{3}(s)\geq
  			\begin{cases}
  				t_{3}(9)=552-648\alpha>0,&\text{if $\alpha\in[0,\frac{1}{2}]$},\\
  				t_{3}(11)=872-992\alpha>0,&\text{if $\alpha\in(\frac{1}{2},\frac{5}{7}]$},\\
  				t_{3}(\frac{17-3\alpha}{5(1-\alpha)})=\frac{1}{25(1-\alpha)}(-264\alpha^2+212\alpha+976)>0,&\text{if $\alpha\in(\frac{5}{7},1)$}.\\
  			\end{cases}
  		\end{aligned}
  	\end{equation*}

  	\textbf{Subcase 2.2.2}. $s$ is even.
  		
  	Let $t_{4}(s)=9(1-\alpha)s^2-2(-\alpha+7)s+8\alpha$. Then we obtain $t_{4}(10)=464-552\alpha$, $t_{4}(10)=760-872\alpha$ and $t_{4}(\frac{2(8-\alpha)}{5(1-\alpha)})=\frac{1}{25(1-\alpha)}(-184\alpha^2-76\alpha+1184)$.
  	
  	Since 
  	\begin{equation*}
  		\begin{aligned}
  			n=\frac{5}{2}s+2\geq
  			\begin{cases}
  				20,&\text{if $\alpha\in[0,\frac{1}{2}]$},\\
  				25,&\text{if $\alpha\in(\frac{1}{2},\frac{5}{7}]$},\\
  				\frac{7}{1-\alpha}+3,&\text{if $\alpha\in(\frac{5}{7},1)$},\\
  			\end{cases}
  		\end{aligned}
  	\end{equation*}
  	we have
  	\begin{equation*}
  		\begin{aligned}
  			\frac{4(-2\alpha+5)}{2(9-9\alpha)}<
  			\begin{cases}
  				2<8\leq s,&\text{if $\alpha\in[0,\frac{1}{2}]$},\\
  				3<10\leq s,&\text{if $\alpha\in(\frac{1}{2},\frac{5}{7}]$},\\
  				\frac{2(8-\alpha)}{5(1-\alpha)}\leq s,&\text{if $\alpha\in(\frac{5}{7},1)$},\\
  			\end{cases}
  		\end{aligned}
  	\end{equation*}
  	and
  	\begin{equation*}
  		\begin{aligned}
  			t_{4}(s)\geq
  			\begin{cases}
  				t_{4}(8)=464-552\alpha>0,&\text{if $\alpha\in[0,\frac{1}{2}]$},\\
  				t_{4}(10)=760-872\alpha>0,&\text{if $\alpha\in(\frac{1}{2},\frac{5}{7}]$},\\
  				t_{4}(\frac{2(8-\alpha)}{5(1-\alpha)})=\frac{1}{25(1-\alpha)}(-184\alpha^2-76\alpha+1184)>0,&\text{if $\alpha\in(\frac{5}{7},1)$}.\\
  			\end{cases}
  		\end{aligned}
  	\end{equation*}
  		
  	Therefore, by (\ref{e1111}), (\ref{e19}), $t_{3}(s)>0$ and $t_{4}(s)>0$, we have $\rho_{\alpha}(G)<n-3$.
  	
  	Note that $\tau(n)>n-3$. Combining the above arguments, we conclude $\rho_{\alpha}(G)<n-3<\tau(n)$, which contradicts $\rho_{\alpha}(G)>\tau(n)$.
  	
  	By Case 1 and Case 2, we complete the proof of Theorem \ref{c2}.
  \end{proof}

  	\section{\textbf{Extremal graphs}}
  	
  	\hspace{1.5em}In this section, we claim that the condition in Theorem \ref{c2} is best possible.
  	
  	\begin{theorem}\label{c6}
  		Let $\alpha \in [0,1)$, $n$, $\tau(n)$ be as in Theorem \ref{c2}. Then $\rho_{\alpha}(K_{1}\vee(K_{n-3}\cup 2K_{1}))=\tau(n)$, and $K_{1}\vee(K_{n-3}\cup 2K_{1})$ contains no $\{P_{2},C_{3},P_{5},\mathcal{T}(3)\}$-factor.
  	\end{theorem}
  	
  	\begin{proof}
  		By the proof of Theorem \ref{c2}, we have $\rho_{\alpha}(K_{1}\vee(K_{n-3}\cup 2K_{1}))=\tau(n)$. Let $v$ be the vertex with the maximum degree of $K_{1}\vee(K_{n-3}\cup 2K_{1})$. Set $S=\{v\}$, then we infer $i(K_{1}\vee(K_{n-3}\cup 2K_{1})-S)=2>\frac{3}{2}|S|$. By Lemma \ref{d6}, the graph $K_{1}\vee(K_{n-3}\cup 2K_{1})$ contains no $\{P_{2},C_{3},P_{5},\mathcal{T}(3)\}$-factor. Therefore, the bound on $A_{\alpha}$-spectral radius established in Theorem \ref{c2} is sharp.
  	\end{proof}

  	\section*{\textbf{Funding}}
  	
  	\hspace{1.5em}This work is supported by the National Natural Science Foundation of China (Grant Nos. 12371347, 12271337).

\vspace{0.5em}

\end{document}